\newtheorem{theorem}{Theorem}[section]
\newtheorem{remark}{Remark}
\newtheorem{lemma}[theorem]{Lemma}
\newtheorem{proposition}[theorem]{Proposition}
\theoremstyle{definition}
\newtheorem{definition}{Definition}[section]
\tikzset{
  block/.style={rectangle, draw, fill=blue!20, text width=10em, text centered, rounded corners, minimum height=4em},
  line/.style={draw, -latex'}
}
\def\mU{\mathbf{U}}
\def\vb{\mathbf{b}}
\def\Real{\mathbb{R}}
\def\mA{\mathbf{A}}
\def\mD{\mathbf{D}}
\def\mC{\mathbf{C}}
\def\pr{\mathbf{Pr}}
\def\vx{\mathbf{x}}
\def\vz{\mathbf{z}}
\def\ve{\mathbf{e}}
\def\E{\mathbb{E}}
\def\eps{\pmb{\varepsilon}}
\def\mM{\mathbf{M}}
\def\tr{\text{Tr}}
\def\id{\mathbf{I}}
\def\hess{\text{Hess}}
\def\det{\text{Det}}
\def\hom{\mathrm{h}}
\def\ihom{\mathrm{ih}}
\newcommand{\ql}[1]{\ifmmode{\textcolor{blue}{QL: #1}}\else {\textcolor{teal}{[QL: #1]}}\fi}
\title{Optimal Design for Linear Models via Gradient Flow}
\author{Ruhui Jin}
\email{rjin18@jhu.edu}
\address{Department of Applied Mathematics and Statistics, John Hopkins University}
\author{Martin Guerra}
\email{mguerra4@wisc.edu}
\address{Department of Mathematics, University of Wisconsin–Madison}
\author{Qin Li}
\email{qinli@math.wisc.edu}
\address{Department of Mathematics, University of Wisconsin–Madison}
\author{Stephen J. Wright}
\email{swright@cs.wisc.edu}
\address{Department of Computer Sciences, University of Wisconsin–Madison}
\date{} 
\subjclass[2020]{62K05, 90C25, 49Q22} 
\keywords{optimal experimental design, Wasserstein gradient flow, continuous design space} 
\begin{document}

\begin{abstract}
Optimal experimental design (OED) aims to choose the observations in an experiment to be as informative as possible, according to certain statistical criteria.
In the linear case (when the observations depend linearly on the unknown parameters), it seeks the optimal weights over rows of the design matrix $\mA$ under certain criteria. 
Classical OED assumes a discrete design space and thus a design matrix with finite dimensions.
In many practical situations, however, the design space is continuous-valued, so that the OED problem is one of optimizing over a continuous-valued design space. The objective becomes a functional over the probability measure, instead of a function of a finite dimensional vector.
This change of perspective requires a new set of techniques to optimize over probability measures, and Wasserstein gradient flow becomes a natural candidate. 
Both the first-order criticality and the convexity properties of the OED objective are presented. 
Computationally, the Monte Carlo particle method is used to translate the gradient flow equation formulation into a numerical algorithm. This algorithm is applied to two elliptic inverse problems.
\end{abstract}

\maketitle

\section{Introduction}
\label{sec: intro}

The problem of inferring unknown parameters from measurements is ubiquitous in real-world engineering contexts, such as biological chemistry \cite{ABBCEKPR12}, medical imaging \cite{CLL18}, and climate science \cite{BGJS13,PMSG14}. 
This problem is termed  ``parameter identification" \cite{ABT18} and ``inverse problems" \cite{T05} in the literature. Generally, inverse problems make use of knowledge of $\mathcal{G}(x;\theta)\approx data(\theta)$ for a set of values of $\theta$ to infer the unknown parameter $x$, where $\mathcal{G}$ is the forward map, and $data$ is the measurement. Both $\mathcal{G}$ and $data$ are functions of $\theta$, the design variable, which is located in a design space $\Omega$. {This variable can define experimental choices, for instance the spatial and temporal coordinates of a source and/or a detector~\cite{brunton}}. We do not assume the measurements $data(\theta)$ to be precise; they may contain measurement error. The problem under study is inspired by PDE-based inverse problems where it is typical to assume that $x$ is a function ---an infinite dimensional object. In this paper, we  focus our attention on the case in which $x$ is a finite-dimensional object in  Euclidean space.

The need to collect informative data economically gives rise to the area of optimal experimental design (OED)~\cite{P06}, which seeks experimental setups that optimize certain statistical criteria. 
Mathematically, the OED problem assigns weights to each possible value of $\theta \in \Omega$ to optimize some statistical criterion. 
When the design space $\Omega$ is finite in size ($|\Omega|=m$), that is,  $\Omega=\{\theta_i\}_{i=1}^m$, the OED weights to be optimized can be gathered in a (finite-dimensional) vector $w=(w_1, w_2, \dotsc, w_m)^\top$, with $\sum_i w_i = 1$ and $w\geq 0$. (The latter vector inequality holds component-wise.)
In many experiments, however, the design space $\Omega$ is continuously indexed and often a subset of $\mathbb{R}^d$. For example, in the tomography problem, sensors can be located anywhere in the physical domain. In such situations, $|\Omega|$ is formally replaced by cardinality of the continuum, and OED weights $w(\theta)$ is continuously indexed. For continuously indexed set $\Omega$, one needs to deploy the notation of Borel sets, and for a fixed $\Omega$, we fix a preset Borel measure throughout without explicitly using it.

A na\"ive strategy to handle the continuous-valued design space is to discretize  $\Omega$ and represent it by $m$ values $\{\theta_i\}_{i=1}^m$, defined a priori, thus reducing the infinite-in-size problem to the finite classical setting. In~\cite{A69}, Atwood showed that if the target to be reconstructed is finite-dimensional, then there exists an optimal design that is supported only on finitely many points, giving this natural approach a justification. Caution is needed in interpreting this theorem: It is unclear that a brute-force discretization can fully capture this finite, measure-zero set. 
One such example is presented in Figure~\ref{fig: motivation}, where we showcase the data structure of the optical tomography problem in the strong photon regime~\cite{AS09,R10}. 
Lasers are placed on the boundary of an unknown medium and the scattered light intensity is detected also around the boundary. The design variable here is $\theta=[\theta^1,\theta^2]$ where $\theta^1$ denotes the incoming laser location and incident angle and $\theta^2$ denotes that of the detector. We concatenate the locations and the angles attributes into a 1D thread of indices for both $\theta^1$ and $\theta^2$. Both variables are continuous-valued. Figure~\ref{fig: motivation} shows that for different incoming $\theta^1$, there are only a few locations for the detectors that can capture the scattered field, as shown in the very thin strip of bright yellow. 
This observation implies that useful information is contained in a subdomain whose measure is almost zero. An arbitrary a-priori discretization would not be able to identify the informative sensor placement.

\begin{figure}
\centering
\includegraphics[scale=0.4]{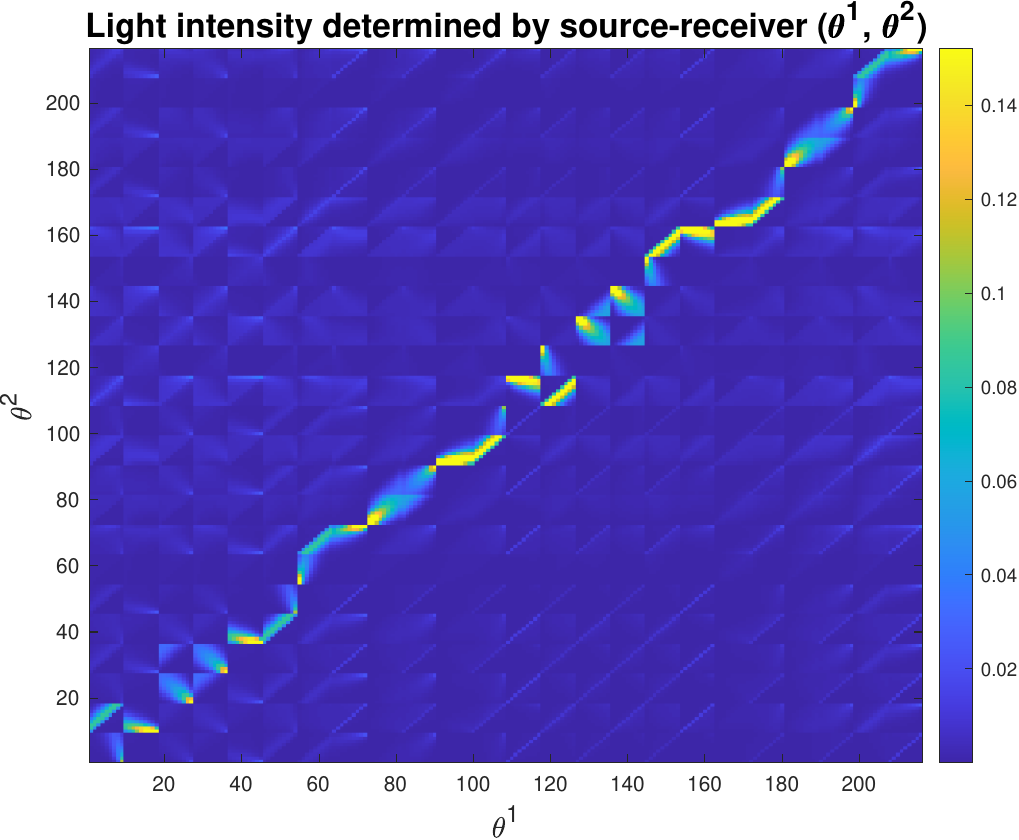}
\caption{Light intensity at receivers (denoted $\theta^2$) for light sources injected at different locations ($\theta^1$). The index $\theta^1$ or $\theta^2$ accounts for both the experiment location as well as the angles. The underlying model is the radiative transfer equation~\cite{chandrasekhar2013radiative}. It is clear that for each light source location, only limited sensor locations can receive non-trivial information.}
\label{fig: motivation}
\end{figure}

We would like to be able to capture an optimal set of observations without specifying candidate values of the observation variables in advance. One possible approach is encapsulated in the following question:
\smallskip
\begin{quote}
\emph{How do we solve the OED problem over a continuous design space?}
\end{quote}
\smallskip

Studying the OED problem from the perspective of continuous design space is not new; in \cite{KW59}, the OED problem was formulated as an optimization problem over the probability measure space. At that time, it was not fully understood how to metricize probability measure space and perform optimization over it. Thus the approach was quickly translated into a similar optimization problem constructed over the finite design space with $|\Omega|<\infty$. Now, recent techniques from optimal transport \cite{FG15} and Wasserstein gradient flow~\cite{AGS05} are available and  have yet to be integrated with experimental design. The main contribution of this paper is to take this step, using gradient flow as our main algorithmic tool.

There is an extensive literature on OED; we summarize relevant works in Section~\ref{subsec: related works}. We summarize our contributions and outline the remainder of the paper in Section~\ref{subsec: our contributions}.

\subsection{Related works}
\label{subsec: related works}
OED has been studied in the literature of statistics, applied mathematics, and machine learning, as well as in certain scientific domains. In the earliest stages of OED development, \cite{K74} gave rigorous justifications for various design criteria. 
Computationally, most early OED methods focus on discrete and combinatorial algorithms, manipulating the weights on points in finite design spaces.

Progress in computing OED has benefited from advances in several upstream research areas. Improved scientific computing capabilities have accelerated the evaluation of forward models; advances in optimization solvers~\cite{HX14,CABK19} have made OED computations faster and more scalable; and theoretical insights from randomized linear algebra~\cite{APSG16,WCG23-1} provide effective warm starts for OED. Within the OED framework, goal-oriented approaches~\cite{AS18,WCG23-2} exploit problem structure, while greedy strategies that is closely related to active learning in machine learning seek sequential sampling policies~\cite{CGJ96,JM21}.

To focus our discussion, we highlight existing works that specifically address optimal design formulated over the probability measure space. Neitzel et al. \cite{NPVW19} formulate sparse sensor placement as an optimization problem over the space of positive Borel measures. This formulation mitigates the non-convexity and combinatorial complexity associated with fixed design points. Their algorithmic approach employs conditional gradient methods, drawing on classical ideas from sequential and vertex exchange methods \cite{JD75}. Similarly, \cite{Molchanov04} frames OED in the measure space, with a particular focus on Poisson-type designs, and employs a gradient-descent method for the optimization. While these measure-based formulations broaden the OED modeling, their computations remain tied to finite support and do not directly address design over continuous measure spaces. In  contrast, our gradient-flow approach is grounded in optimal transport theory.

Other  works that leverage measure-based ideas for Bayesian design, making the estimation of design criteria tractable, include those of Huan \cite{huan2015numerical} and Koval et al. \cite{koval2024tractable}. 
These approaches construct measure-transport maps to approximate the joint distributions arising in nonlinear inverse problems. While transport maps facilitate the Bayesian computation, the OED problem itself is not formulated as an optimization over probability measures.

Another line of research aims to enhance computational efficiency by relaxing the optimality condition. 
In this regard, sampling and sketching techniques are crucial, especially in works that adopt the perspective of numerical linear algebra.
A comprehensive introduction to these flexible randomized methods can be found in the reviews \cite{mahoney2011randomized,Martinsson_Tropp_2020,hellmuth2026dataselectioninterfacepdebased}.

\subsection{Our contributions}
\label{subsec: our contributions}

The main contribution of this paper is  a computational framework for solving OED over a continuous design space. Inspired by recent developments in optimal transport, we define a gradient flow scheme for optimizing a smooth probability distribution driven by the OED objective on the Wasserstein metric. We  use Monte-Carlo particle approximation to translate the continuum flow of probability measure into gradient-descent flow for the finite set of sample particles, whose evolution captures the dynamics of the underlying infinite-dimensional flow. This evolution can be characterized by a coupled system of ordinary differential equations (ODE). We investigate theoretical aspects of the proposed technique, including convexity, criticality conditions, computation of Fr\'{e}chet derivatives, and convergence error with respect to key hyperparameters in the particle gradient flow algorithm. Finally, we apply our approach to two problems, with one from medical imaging: the linearized electrical impedance tomography (EIT) and the other related to inverse Darcy flow.
The experimental design produced by our algorithm  provides informative guidance for sensor placement.

The remainder of the paper is organized as follows. 
We prepare for the technical background on the OED problem and the gradient flow technique in Section~\ref{sec: problem}. 
In Section~\ref{sec: methodology}, we explain gradient flow for optimal design on continuous space, and introduce the particle gradient flow algorithm, Algorithm~\ref{alg: particle flow}. In Section~\ref{sec: theory}, we provide the theoretical properties of continuous OED optimization, including convergence guarantees for Algorithm\ref{alg: particle flow}. Finally, we test \ref{alg: particle flow} on two examples. The first is the linearized EIT inverse problem, whose numerical set-up and design performance are explained in Section~\ref{sec: setup} and \ref{sec: numerics}, respectively. 
The second test on 1D Darcy flow is described in Section~\ref{sec: new numerics}

\section{Preliminaries and toolkits}
\label{sec: problem}

We present here the OED problem in its conventional discrete setting (Section~\ref{sec:oed}).
We then describe Wasserstein gradient flow, a fundamental tool that enables extension of OED to the continuous sampling space (Section~\ref{sec:wass}).

\subsection{Optimal experimental design} \label{sec:oed}
To introduce the classical OED setup, we consider the linear regression model:
\begin{equation}
\label{eqn: linear model}
\textbf{data}= \mA \vx^* + \eps.
\end{equation}
The number of measurements is $m \in \mathbb{N}^+$, with  observations collected in the entries of vector $\textbf{data}\in \Real^m.$
The (linear) forward observation map is encoded in the matrix $\mA \in \Real^{m \times d} ~(m \gg d)$, with random noise contributions in the vector $\eps \in \Real^{m}$.
We wish to infer the parameters $\vx^*\in \Real^d.$ 

For the vastly overdetermined system \eqref{eqn: linear model}, an estimate of $\vx^*$ can be obtained without requiring access to  the full map $\mA$. The aim of OED is to identify a combination of  measurements that enables accurate yet economical recovery. 
Specifically, since each row of the system \eqref{eqn: linear model} represents an {\em experiment},  we seek a vector $w = (w_1,w_2,\dotsc,w_m)^\top$ whose components represent the {\em weights} that we assign to each experiment, that solves the following problem:
\begin{equation}
\min_w \, F[w]   \;\; \mbox{subject to} \;\; w \geq 0, \;  \sum_{i=1}^m w_i = 1.
\end{equation}
The function $F: \Real^m \to \Real$ represents certain design criterion, with smaller objective values of $F$ implying better design.

Many statistical criteria have been  proposed for OED. 
We present the two most commonly used standards \cite{K74}, denote by the letters ``A" (for ``average") and ``D" (for ``determinant").

It is well known that the  optimal inference result for \eqref{eqn: linear model} {(under the $\ell_2$ metric)} that makes use of all data is 
\[
\hat{\vx} = \big(\mA^\top \mA \big)^{-1} \mA^\top \textbf{data}.
\]
When the noise vector is assumed to follow an i.i.d. Gaussian distribution, that is, 
$\eps\sim\mathcal{N}(0,\sigma^2 \id_m)$, the variance matrix of the solution $\hat{\vx}$ is
\begin{eqnarray*}
\text{var}(\hat{\vx}) & = & \mathbb{E} \big[(\hat{\vx} - \vx^*)(\hat{\vx} - \vx^*)^\top \big] \\
& =  &\mathbb{E}\big[ \big(\mA^\top \mA \big)^{-1} \mA^\top \eps \, \eps^\top \mA \big(\mA^\top \mA \big)^{-1} \big] \\
& = & \big(\mA^\top \mA \big)^{-1} \mA^\top \mathbb{E} [\eps \, \eps^\top] \mA \big(\mA^\top \mA \big)^{-1} \\
& = & \sigma^2 (\mA^\top \mA)^{-1}.
\end{eqnarray*}
The above calculation shows that the inference ``uncertainty" depends on the property of the data matrix through the term $(\mA^\top \mA)^{-1}$. We note at the outset that this formulation holds only under the assumption that $\mA^\top \mA$ is invertible, that is, the columns of $\mA$ span the full $d$-dimensional space. In the setting where $m\gg d$, this assumption is usually satisfied.

Since smaller variances indicate lower levels of uncertainty and a more accurate reconstruction, we can modify this variance matrix by weighting the experiments using the weights $w$, leading to the following weighted inverse variance \cite[Section~7.5]{BoyV03}:
\begin{equation}\label{eqn:A_T_A_w}
\mA^\top \mA [w] := \sum_{k=1}^m w_k \,\mA(k,:)^\top \mA(k,:).
\end{equation}
The OED problem chooses $w$  to minimize a scalar function of the inverse of this weighted variance matrix. The A- and D- design criteria are defined as follows:

\begin{subequations}
\begin{eqnarray}
\label{eqn: discrete A-opt}
    \text{A-optimal:} \quad \min F^A[w] & \equiv & \tr\big(\mA^\top \mA [w] \big)^{-1}, \\
    \label{eqn: discrete D-opt}
    \text{D-optimal:} \quad \max F^D[w] & \equiv & \log\big(\det\big( \mA^\top \mA [w ]\big)\big).
\end{eqnarray}
\end{subequations}

\begin{remark}
In some scenarios, one would further like the weight vector $w$ to be sparse, so that only a small number of experiments are chosen. One well studied approach deploys the classical concept of leverage score~\cite{DMMW12,MMY14}. 
\end{remark}

\subsection{Wasserstein gradient flow} \label{sec:wass}

We describe here the basics of gradient flow \cite{AGS05} and related methods.
Analogous to gradient descent in Euclidean space, gradient flow optimizes a probability measure objective by defining a flow in the variable space based on a gradient of the objective function. Proper metricization of the space is  a critical issue.
In this regard, we leverage significant advances in optimal transport \cite{S15,FG21} and Wasserstein gradient flow \cite{JKO98,AGS05}, reviewed below.
 
We require the class of probability measures $\rho$ to have bounded second moments, that is,
\begin{equation}
\label{eqn: Pr2}
\pr_2 (\Omega) =\left\{ \rho \mid \rho \text{ is a Borel probability measure and }\int_\Omega |\theta|^2\dd\rho(\theta) < \infty\right\}\,.
\end{equation}
Note that the probability distribution $\rho$ is not necessarily absolute continuous with respect to the Borel measure assigned to $\Omega$. 
Dirac delta functions can be used, enabling practical computations. It is natural to equip the $\pr_2$ space \eqref{eqn: Pr2} with the Wasserstein-2 metric to measure the distances between probability distributions.  
For this purpose, we define the joint probability measure $\gamma: \Omega \times \Omega \to \Real$ and  the set $\Gamma(\mu, \nu)$ to be the space of joint probability measures whose first and second  marginals are  $\mu \in \pr_2(\Omega)$ and $\nu \in \pr_2(\Omega)$, respectively. 

\begin{definition}
Given the domain $\Omega,$ the Wasserstein-$2$ distance between two probability measures $\mu, \nu \in \pr_2(\Omega)$ is defined as 
\begin{equation}
\label{eqn: W2 metric}
W_2(\mu, \nu) : = \inf_{\gamma \in \Gamma(\mu, \nu)}\left( \int_{\Omega \times \Omega} \|x-y\|^2 \dd \gamma(x,y) \right)^{1/2}, 
\end{equation}
where the norm in the integrand is the Euclidean $\ell_2$-distance in $\Omega$.
\end{definition}

Over the space $\pr_2(\Omega)$, consider an objective functional $F: \pr_2(\Omega) \to \Real$ with the optimization problem:
\[
\min_{\rho\in\pr_2(\Omega)} \, F[\rho]\,.
\]
Analogous to gradient descent in Euclidean space, where the ODE $\dot{x}=-\nabla_xf$ drives the iterates toward minimizers of $f$, a natural solver in this setting is the gradient flow. Due to the nonlinear structure of $\pr_2(\Omega)$ and the geometry induced by the Wasserstein metric, the notion of gradient takes a nonstandard form. The descent dynamics are given by the Wasserstein gradient flow of $F$ (see \cite{S15}, Chapter 8.2, and the celebrated JKO scheme \cite{JKO98}):
\begin{equation}
\label{eqn: W2 flow}
\partial_t \rho = - \nabla _{W_2} F[\rho] = \nabla_\theta\cdot  \left(\rho \, \nabla_\theta \frac{\delta F [\rho]}{\delta \rho} \right).
\end{equation}
If $F$ is convex along $W_2$-geodesics, then the long-time evolution of \eqref{eqn: W2 flow} converges to a minimizer of $F$, whenever such a minimizer exists.

\section{Optimal design via gradient flow}
\label{sec: methodology}

In this section, we start by defining the optimal design problem in continuous space, defining continuous analogs of the two objective functions  in \eqref{eqn: discrete A-opt} and \eqref{eqn: discrete D-opt} in the probability measure space, and obtaining expressions for the gradients of these functionals. 
Next, we define a particle approximation to simulate this gradient flow, as summarized in \ref{alg: particle flow}, so to optimize the objective functionals.

\subsection{Continuous optimal design}
In the continuous setting, the $m \times d$ matrix $\mA$ of \eqref{eqn: linear model} is replaced by an object that we call the ``continuous experiment/data matrix" with $d$ columns but ``row" space indexed by $\theta\in\Omega$, so that  $\mA(\theta,:)$ is a real row vector with $d$ elements. We also define a ``continuous" counterpart of the weighted matrix product of \eqref{eqn:A_T_A_w}:
\begin{equation}
\label{eqn: sampled ATA}
\mA^\top \mA [\rho] = \int_\Omega \mA(\theta,:)^\top \mA(\theta,:)\dd \rho(\theta) \in \mathbb{R}^{d \times d}. 
\end{equation}
Accordingly, following the A- and D-optimal discrete design in~\eqref{eqn: discrete A-opt} and \eqref{eqn: discrete D-opt}, we arrive at the corresponding criteria in the continuous context: 
\begin{eqnarray}
\label{eqn: cont A-opt}
\text{A-optimal:} \quad \rho^* & := & \arg\min_{\rho \in \pr_2(\Omega)} F^A[\rho] \equiv \tr\big(\mA^\top \mA [\rho] \big)^{-1}, \\
\label{eqn: cont D-opt}
\text{D-optimal:} \quad \rho^* & :=& \arg\max_{\rho \in \pr_2(\Omega)} F^D[\rho] \equiv \log\big(\det\big( \mA^\top \mA [\rho]\big)\big).
\end{eqnarray}
These definitions require the continuous experiment/data matrix $\mA$ to have full column rank $d$. This is a weak assumption because the continuous design space $\Omega$ would generally not have uniform dependencies among the components of $\mA(\theta,:)$ for all $\theta \in \Omega$.

To apply  Wasserstein gradient flow~\eqref{eqn: W2 flow}\footnote{Note that D-optimal design formulates a maximization problem \eqref{eqn: cont D-opt}. Contrary to A-optimal design, the associated gradient-flow follows in the ascending direction of the gradient. Consequently, the sign in \eqref{eqn: W2 flow} should be flipped.}, we need to prepare first variation of the OED objectives. 
These are defined in the following result.
\begin{proposition}
\label{prop:gradient_flow_cont}
Assume that the continuous experiment/data matrix $\mA$ defined in this section is full rank. The first variation for~\eqref{eqn: cont A-opt} and~\eqref{eqn: cont D-opt} are
\begin{eqnarray}
\label{eqn: A derivative}
&& \frac{\delta F^A[\rho]}{\delta \rho}(\theta) = -\mA(\theta,:) \big(\mA^\top \mA [\rho] \big)^{-2} \mA(\theta,:)^\top\,, \\
\label{eqn: D derivative}
&& \frac{\delta F^D[\rho]}{\delta \rho}(\theta) = \mA(\theta,:) \big(\mA^\top \mA [\rho] \big)^{-1} \mA(\theta,:)^\top\,. 
\end{eqnarray}
\end{proposition}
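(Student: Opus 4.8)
The plan is to compute the first variation of each functional directly, using that the map $\rho \mapsto \mA^\top\mA[\rho]$ is linear in $\rho$ together with the two standard matrix-calculus identities for the inverse and the log-determinant. Write $\mM[\rho] := \mA^\top\mA[\rho] \in \Real^{d\times d}$; by the full-rank assumption this matrix is symmetric positive definite, hence invertible, and I would extend $\rho \mapsto \mM[\rho]$ linearly to finite signed measures through \eqref{eqn: sampled ATA}. Fix $\rho \in \pr_2(\Omega)$ and a signed perturbation $\chi$ of finite second moment with $\int_\Omega \dd\chi(\theta) = 0$ (so that $\rho + \epsilon\chi$ stays in $\pr_2(\Omega)$ for small $|\epsilon|$ on the relevant tangent cone), and record first that by linearity
\[
\frac{d}{d\epsilon}\Big|_{\epsilon=0}\mM[\rho+\epsilon\chi] \;=\; \mM[\chi] \;=\; \int_\Omega \mA(\theta,:)^\top\mA(\theta,:)\,\dd\chi(\theta).
\]
Because $\mM[\rho]$ is invertible, $\mM[\rho+\epsilon\chi]$ remains invertible for $|\epsilon|$ small, so every expression below is well defined.

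For \eqref{eqn: A derivative} I would combine the identity $\frac{d}{d\epsilon}\mM^{-1} = -\mM^{-1}\,\mM[\chi]\,\mM^{-1}$ with cyclicity of the trace to get $\frac{d}{d\epsilon}\big|_{\epsilon=0} F^A[\rho+\epsilon\chi] = -\tr\big(\mM[\rho]^{-2}\,\mM[\chi]\big)$, then substitute the integral form of $\mM[\chi]$, interchange trace and integral, and use that $\tr\big(\mM[\rho]^{-2}\mA(\theta,:)^\top\mA(\theta,:)\big) = \mA(\theta,:)\,\mM[\rho]^{-2}\,\mA(\theta,:)^\top$ is a scalar, obtaining
\[
\frac{d}{d\epsilon}\Big|_{\epsilon=0} F^A[\rho+\epsilon\chi] \;=\; -\int_\Omega \mA(\theta,:)\,\mM[\rho]^{-2}\,\mA(\theta,:)^\top\,\dd\chi(\theta).
\]
By the definition of the Fr\'echet derivative, the function paired against $\dd\chi$ is $\frac{\delta F^A[\rho]}{\delta\rho}(\theta)$, which is exactly \eqref{eqn: A derivative}. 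For \eqref{eqn: D derivative} I would repeat this argument with Jacobi's formula $\frac{d}{d\epsilon}\log\det\mM = \tr(\mM^{-1}\,\mM[\chi])$ in place of the inverse identity; the analogous scalar simplification $\tr\big(\mM[\rho]^{-1}\mA(\theta,:)^\top\mA(\theta,:)\big) = \mA(\theta,:)\,\mM[\rho]^{-1}\,\mA(\theta,:)^\top$ then gives the stated formula.

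The algebra is routine; the care lies in the functional-analytic bookkeeping, which I would handle as follows. First I would make precise the admissible class of perturbations — finite-second-moment signed measures of total mass zero — and note that on $\pr_2(\Omega)$ the Fr\'echet derivative is determined only up to an additive constant, the formulas above being the canonical representatives; this ambiguity is harmless for the flow \eqref{eqn: W2 flow}, which depends only on $\nabla_\theta \frac{\delta F}{\delta\rho}$. Second I would justify differentiating under the integral and interchanging limit, trace, and integral: the entries of $\mA(\theta,:)^\top\mA(\theta,:)$ are $\rho$-integrable by the bounded-second-moment condition \eqref{eqn: Pr2}, the matrices $\mM[\rho]^{-1}$ and $\mM[\rho]^{-2}$ are fixed and bounded, and the full-rank hypothesis keeps $\mM[\rho+\epsilon\chi]$ uniformly away from singularity along the perturbation so that $\epsilon \mapsto \mM[\rho+\epsilon\chi]^{-1}$ is differentiable at $\epsilon=0$. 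I expect this last point — controlling invertibility of the perturbed matrix and showing the difference quotients converge strongly enough to pass the limit through the integral — to be the main (though essentially standard) obstacle; once it is in place, the two matrix-derivative identities supply the rest.
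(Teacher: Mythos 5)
Your proposal is correct and follows essentially the same route as the paper: linearity of $\rho \mapsto \mA^\top\mA[\rho]$, the first-order expansion of the matrix inverse (respectively Jacobi's formula for the log-determinant), cyclicity of the trace, interchange of trace and integral, and the scalar identity $\tr\big(\mB\,\mA(\theta,:)^\top\mA(\theta,:)\big) = \mA(\theta,:)\,\mB\,\mA(\theta,:)^\top$. The extra care you take with the admissible perturbation class and the additive-constant ambiguity is a welcome refinement but does not change the argument.
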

\begin{proof}
Given $\rho\in\pr_2(\Omega)$, the first variation of $F$ at $\rho$ is a function $\frac{\delta F}{\delta\rho}:\Omega\to\Real$ such that, for any signed measure $\eta$ with zero total mass (that is, $\eta(\Omega)=0$), and for sufficiently small $\epsilon$ such that $\rho+\epsilon\eta\in\pr_2(\Omega)$, one has
\begin{equation}
\label{eqn: Frechet derivative}
F[\rho + \epsilon \eta] - F[\rho] = \epsilon 
\int_\Omega \frac{\delta F (\theta) }{\delta\rho }\dd \eta(\theta) +o(\epsilon). 
    \end{equation}
For the A-optimal criterion \eqref{eqn: cont A-opt}, we have
\begin{eqnarray*}
        F^A[\rho + \epsilon \eta] - F^A[\rho] &=& \displaystyle \tr\big(\mA^\top \mA [\rho + \epsilon \eta] \big)^{-1} -  \tr\big(\mA^\top \mA [\rho] \big)^{-1} \\
  & = & \tr\Big( \big(\mA^\top \mA [\rho] +\mA^\top \mA [\epsilon \eta]\big)^{-1} - \big( \mA^\top \mA [\rho] \big)^{-1} \Big)\\
  & = & \epsilon\tr\Big( -\big(\mA^\top \mA [\rho]\big)^{-1} \big(\mA^\top \mA [\eta]\big)\big(\mA^\top \mA [\rho]\big)^{-1}\Big) +O(\epsilon^2).
\end{eqnarray*}
With more finite-dimensional matrix calculation, and compare with~\eqref{eqn: Frechet derivative}, we have
    \begin{eqnarray}
   \nonumber
    \int_\Omega \frac{\delta F^A }{\delta \rho} (\theta)  \dd \eta (\theta) &=&    \tr\Big( -\big(\mA^\top \mA [\rho]\big)^{-1} \big(\mA^\top \mA [\eta]\big)\big(\mA^\top \mA [\rho]\big)^{-1}\Big)\\   
    \nonumber
    &= & \tr\Big( -\big(\mA^\top \mA [\rho]\big)^{-2} \mA^\top \mA [\eta]\Big)  \\
    \nonumber
    & = & \tr\Big(-\big(\mA^\top \mA [\rho]\big)^{-2} \int_\Omega \mA(\theta,:)^\top \mA(\theta,:)   \dd \eta (\theta)  \Big) \\
     \label{eqn: A difference}
  & = & \int_\Omega \tr\Big(-\big(\mA^\top \mA [\rho]\big)^{-2} \mA(\theta,:)^\top \mA(\theta,:)\Big)\dd\eta (\theta)  \\
  \nonumber
 & = & \int_\Omega {-\mA(\theta,:) \big(\mA^\top \mA [\rho]\big)^{-2}  \mA(\theta,:)^\top}  \dd \eta (\theta) \,.  
\end{eqnarray}
The above results make use of the fact that the trace of matrix products is commutative (lines 2 and 5) and also that the trace and integration operations are interchangeable (line 4). Since this holds true for all $\eta$, we obtain the first variation of A-design:
\[
\frac{\delta F^A  }{\delta \rho} (\theta)  = -\mA(\theta,:) \big(\mA^\top \mA [\rho]\big)^{-2}  \mA(\theta,:)^\top.
\]

A similar derivation holds for the D-optimal objective \eqref{eqn: cont D-opt}. We consider 
\[
F^D[\rho + \epsilon\eta] - F^D[\rho] 
= \log\Big(\det\big( \mA^\top \mA [\rho + \epsilon\eta]\big)\big)  - \log\big(\det\big( \mA^\top \mA [\rho]\big)\Big).
\]
The linear approximation to the first term on the right-hand side is 
\begin{eqnarray*}
&& \log\Big(\det\big( \mA^\top \mA [\rho + \epsilon\eta]\big)\Big)\\
& = &
\log\Big(\det\big( \mA^\top \mA [\rho]\big)\Big) + \epsilon\, \tr\Big((\mA^\top \mA [\rho])^{-1}\mA^\top \mA [\eta]\Big)+O(\epsilon^2)\,,
\end{eqnarray*}
where we used Jacobi's formula for derivative of matrix determinant, and the $O(\epsilon^2)$ term depends only on spectral information of $\mA$ and is independent of $\eta$. Comparing with~\eqref{eqn: Frechet derivative}, we then have the linear difference term for $F^D$:
\begin{eqnarray*}
    F^D[\rho + \epsilon\eta] - F^D[\rho] & = &\epsilon
\tr\Big(( \mA^\top \mA [\rho])^{-1} \mA^\top \mA [\eta]\Big) +O(\epsilon^2)\\
&= & \epsilon\int_\Omega \mA(\theta,:)^\top (\mA^\top \mA [\rho])^{-1}  \mA(\theta,:) \dd \eta (\theta) +O(\epsilon^2)\,.\\ 
\end{eqnarray*}
Compare it with~\eqref{eqn: Frechet derivative}, we have:
\[
\frac{\delta F^D  }{\delta \rho}(\theta) = \mA(\theta,:) \big(\mA^\top \mA [\rho]\big)^{-1}  \mA(\theta,:)^\top.
\]
\end{proof}

\subsection{Particle gradient flow}
\label{subsec: particle gradient flow}
Proposition~\ref{prop:gradient_flow_cont} in combination with \eqref{eqn: W2 flow} defines the gradient flow for finding the OED probability measure over the design space $\Omega$. Classical techniques for solving this PDE formulation involve discretizing $\Omega$ and tracing the evolution of $\rho$ on the resulting mesh. 
This strategy presents a  computational challenge: The size of the mesh (or equivalently, the degrees of freedom required to represent $\rho$ in the discrete setting) grows exponentially with the dimension of the design space. 
The computational complexity required to implement this strategy would exceed the experiment budget, in terms of the optimized weighting object size and total measurements. 

One advantage of employing the Wasserstein gradient flow is its close relationship to a particle ODE interpretation \cite{CHPW15,BCDP15,CB18}. 
We can use Monte Carlo to represent the probability measure $\rho$ by a particle samples on the design space $ \Omega$. 
This simulation translates the PDE into a coupled ODE system on the sample vector $\theta \in \Omega$. Following the descending trajectory of~\eqref{eqn: W2 flow}, when $\rho$ is known, the characteristic of this PDE is 
\begin{equation}
\label{eqn: particle flow}
    \frac{\dd \theta }{\dd t} = - \nabla_\theta \frac{\delta F[\rho]}{\delta \rho}(\theta).
\end{equation}
In computation, the distribution $\rho$ is unknown, but we can use an empirical measure for the approximation to $\rho$. Given a fixed number of particles $N \in \mathbb{N}^+,$ we consider a set of particles $\{ \theta_i \}_{i=1}^N \subset \Omega.$ The estimated probability $\rho$ is the average of Dirac-delta measures at selected particles $\rho_N$, that is, 
\begin{equation}
\label{eqn: empirical measure}
 \rho \approx \rho_N = \frac{1}{N} \sum_{i=1}^N  \delta_{\theta_i} \in \pr_2(\Omega).
\end{equation}
The distribution $\rho_N$  approximates the continuous range of $\Omega$ better than a costly setup in which  discretization is predefined in Section~\ref{subsec: particle gradient flow}. By inserting the empirical measure~\eqref{eqn: empirical measure} into~\eqref{eqn: particle flow}, and employing forward-Euler time integration, we arrive at the following particle gradient flow method.

\begin{algorithm}[!htb]
\caption{Particle gradient flow}

 \textbf{Input:} 
Number of particles $N$;
number of iterations $T$; time step $\dd t$; \\
initial particles $\theta_1^0, \dotsc, \theta_N^0 \subset \Omega$ and starting measure $\rho^0_N = \frac{1}{N} \sum_{i=1}^N \delta_{\theta^0_i}  $\\
 \textbf{Output:} probability measure $\rho \in \pr_2 (\Omega)$
 \vspace{1mm}
 \begin{algorithmic}[1]
\For{$t = 1, \dotsc, T$}
 \For{$i = 1, \dotsc, N$}
\State $\displaystyle \theta^t_i \gets \theta^{t-1}_i - \dd t \, \nabla_{\theta} \frac{\delta F[\rho_N^{t-1}]}{\delta \rho} (\theta_i^{t-1})$ \Comment{update particles (descent)}
\EndFor
 \State $\displaystyle \rho_N^t \gets \sum_{i=1}^N \frac{1}{N} \delta_{\theta_i^{t}}$ \Comment{update probability measure}
 \EndFor
\State \Return $\rho \gets \rho_N^T$
\end{algorithmic}
\label{alg: particle flow}
\end{algorithm}

\begin{remark}
Line 3 of Algorithm~\ref{alg: particle flow} is the descent update formula for  minimization of $F$. 
For maximization, as in  the D-optimal design \eqref{eqn: cont D-opt}, we switch the minus sign to plus to obtain ascent.
\end{remark}

\begin{remark}
We stress the difference between this algorithm and the classical OED pursuit methods, such as the Fedorov method in~\cite{F13}. Our algorithm is the particle Monte Carlo method that implements the OED gradient flow formulations \eqref{eqn: A velocity}-\eqref{eqn: D velocity}. As in other Monte Carlo method for gradient flow, each particle carries the same weight --- but the {\em locations} of the particles are being updated. By contrast, the conventional methods update only the {\em weights}. \end{remark}

Algorithm~\ref{alg: particle flow} requires calculation of the particle velocity forms $\nabla_\theta \frac{\delta F[\rho_N]}{\delta \rho}$ for A- and D-optimal design criteria \eqref{eqn: cont A-opt}, \eqref{eqn: cont D-opt}. 
Details of this computation are shown in the next result.

\begin{proposition}\label{lemma: velocity}
Fix a set of particles $\{\theta_i \}_{i=1}^N \subset \Omega$ and consider the empirical measure \eqref{eqn: empirical measure}. 
The flow field of a sample particle $\theta \in \Omega$   under the A-optimal objective \eqref{eqn: cont A-opt} is
\begin{equation}
\label{eqn: A velocity}
\nabla_{\theta} \frac{\delta F^A[\rho_N]}{\delta \rho} (\theta) =-2\, \nabla_\theta \mA(\theta,:) \Big( \frac{1}{N} \sum_{i=1}^N \mA(\theta_i,:)^\top \mA(\theta_i,:) \Big)^{-2}\mA(\theta,:)^\top.
\end{equation}
For the D-optimal objective \eqref{eqn: cont D-opt} the flow field is
\begin{equation}
\label{eqn: D velocity}
\nabla_{\theta} \frac{\delta F^D[\rho_N]}{\delta \rho} (\theta) =2\, \nabla_\theta \mA(\theta, :) \Big( \frac{1}{N} \sum_{i=1}^N \mA(\theta_i,:)^\top \mA(\theta_i, :)\Big)^{-1}\mA(\theta,:)^\top.
\end{equation}
\end{proposition}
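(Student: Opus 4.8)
The statement follows from \Cref{prop:gradient_flow_cont} by a single chain-rule computation, so the work is mostly careful bookkeeping rather than a genuine difficulty. By definition, the particle velocity in \eqref{eqn: particle flow} is the spatial gradient $\nabla_\theta$ of the Fr\'echet derivative $\delta F[\rho]/\delta\rho$ with the measure frozen at the current empirical measure $\rho=\rho_N$. I would therefore proceed in three steps: (i) substitute the empirical measure into $\mA^\top\mA[\rho]$; (ii) insert the result into the formulas \eqref{eqn: A derivative} and \eqref{eqn: D derivative}; and (iii) differentiate in $\theta$, treating the particle set $\{\theta_i\}_{i=1}^N$ as fixed data (which is legitimate precisely because, at the instant the velocity is evaluated, $\rho_N$ is fixed and $\theta$ is the free variable, exactly as in \eqref{eqn: particle flow}).

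First I would record the substitution: plugging the empirical measure \eqref{eqn: empirical measure} into the Gram operator \eqref{eqn: sampled ATA} gives
\[
\mG_N := \mA^\top\mA[\rho_N] = \frac{1}{N}\sum_{i=1}^N \mA(\theta_i,:)^\top\mA(\theta_i,:),
\]
a symmetric $d\times d$ matrix that does not depend on the free evaluation point $\theta$. Hence, with $\rho=\rho_N$ frozen, all $\theta$-dependence of $\delta F^A[\rho_N]/\delta\rho(\theta) = -\mA(\theta,:)\mG_N^{-2}\mA(\theta,:)^\top$ and of $\delta F^D[\rho_N]/\delta\rho(\theta) = \mA(\theta,:)\mG_N^{-1}\mA(\theta,:)^\top$ sits entirely in the two outer factors $\mA(\theta,:)$ and $\mA(\theta,:)^\top$.

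Next I would differentiate the generic quadratic form. Writing $a(\theta) = \mA(\theta,:)^\top \in \Real^d$ and letting $M$ be any fixed symmetric $d\times d$ matrix, the product and chain rules give $\nabla_\theta\big(a(\theta)^\top M\, a(\theta)\big) = 2\,\nabla_\theta\mA(\theta,:)\,M\,\mA(\theta,:)^\top$, where $\nabla_\theta\mA(\theta,:)$ denotes the transpose of the Jacobian of the map $\theta\mapsto\mA(\theta,:)$, i.e.\ a linear map from the tangent space of $\Omega$ into $\Real^d$, so that the right-hand side is indeed a tangent vector of the appropriate dimension. Both $M=\mG_N^{-1}$ and $M=\mG_N^{-2}$ are symmetric, being inverses and powers of the symmetric matrix $\mG_N$; applying the identity to \eqref{eqn: A derivative} with $M=\mG_N^{-2}$ yields $\nabla_\theta\,\delta F^A[\rho_N]/\delta\rho(\theta) = -2\,\nabla_\theta\mA(\theta,:)\,\mG_N^{-2}\,\mA(\theta,:)^\top$, and applying it to \eqref{eqn: D derivative} with $M=\mG_N^{-1}$ yields $\nabla_\theta\,\delta F^D[\rho_N]/\delta\rho(\theta) = 2\,\nabla_\theta\mA(\theta,:)\,\mG_N^{-1}\,\mA(\theta,:)^\top$. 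Re-expanding $\mG_N$ gives exactly \eqref{eqn: A velocity} and \eqref{eqn: D velocity}.

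The one point that deserves care — which I would flag explicitly as the main caveat rather than a real obstacle — is the invertibility of $\mG_N$. Unlike the continuum statement, where full column rank of $\mA(\cdot,:)$ over all of $\Omega$ forces $\mA^\top\mA[\rho]$ to be positive definite, a finite ensemble makes $\mG_N$ singular whenever $N<d$ or the $\theta_i$ happen to lie in a degenerate configuration, in which case the velocities \eqref{eqn: A velocity}--\eqref{eqn: D velocity} are undefined. In practice one initializes \Cref{alg: particle flow} with $N\ge d$ particles in general position so that $\mG_N^{-1}$ exists at $t=0$, and by continuity of the eigenvalues of $\mG_N$ this property persists along the discrete flow for a sufficiently small step size $\dd t$.
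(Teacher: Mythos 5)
Your proposal is correct and follows essentially the same route as the paper's proof: substitute the empirical measure into $\mA^\top\mA[\rho]$, plug into the Fr\'echet derivative formulas of \Cref{prop:gradient_flow_cont}, and differentiate the resulting quadratic form in $\theta$ with the (now $\theta$-independent) middle matrix held fixed, picking up the factor of $2$ from symmetry. Your added caveat about invertibility of the empirical Gram matrix when $N<d$ or the particles are degenerate is a sensible observation the paper does not make explicit, but it does not change the argument.
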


\begin{proof}

For the empirical measure \eqref{eqn: empirical measure},  the sampled target \eqref{eqn: sampled ATA} is
\[
\mA^\top \mA[\rho_N] = \frac{1}{N}\sum_{i=1}^N \mA(\theta_i,:)^\top \mA(\theta_i,:).
\]
We substitute this term into the Fr\'{e}chet derivatives of \eqref{eqn: A derivative} and \eqref{eqn: D derivative}. 
For any particle $\theta \in \Omega,$ we obtain 
\begin{eqnarray*}
\frac{\delta F^A[\rho_N]}{\delta \rho}(\theta) &=& -\mA(\theta,:) \Big( \frac{1}{N} \sum_{i=1}^N \mA(\theta_i,:)^\top \mA(\theta_i,:) \Big)^{-2}\mA(\theta,:)^\top, \\
\frac{\delta F^D[\rho_N]}{\delta \rho}(\theta) &=& \mA(\theta,:) \Big( \frac{1}{N} \sum_{i=1}^N \mA(\theta_i,:)^\top \mA(\theta_i,:) \Big)^{-1}\mA(\theta,:)^\top.
\end{eqnarray*}
Note that in the computation for both derivatives, the middle matrix terms are already evaluated at fixed values $\{\theta_i\}_{i=1}^N$ and thus are independent of $\theta$. When taking the gradient with respect to $\theta$, only the      {terms $\mA(\theta,:)$} contribute, and we arrive at
\begin{eqnarray*}
\nabla_\theta \frac{\delta F^A[\rho_N]}{\delta \rho}(\theta) 
& = & - 2\, \nabla_\theta \mA(\theta,:) \Big( \frac{1}{N} \sum_{i=1}^N \mA(\theta_i,:)^\top \mA(\theta_i,:)\Big)^{-2}\mA(\theta,:)^\top, \\
\nabla_\theta \frac{\delta F^D[\rho_N]}{\delta \rho}(\theta)
& = & 2\, \nabla_\theta \mA(\theta,:) \Big( \frac{1}{N} \sum_{i=1}^N \mA(\theta_i,:)^\top \mA(\theta_i,:)\Big)^{-1}\mA(\theta,:)^\top.
\end{eqnarray*} 
\end{proof}

\section{Theoretical guarantees}
\label{sec: theory}
We provide some theoretical properties regarding the continuous OED and the proposed Algorithm~\ref{alg: particle flow}. 
We study the first-order critical condition  in Section~\ref{subsec: criticality} and the convexity of the OED objective functionals Section~\ref{subsec: convexity}. In Section~\ref{subsec: simulation error}, we discuss the convergence of Algorithm~\ref{alg: particle flow}. 

\subsection{First-order critical condition}
\label{subsec: criticality}
We first specify the stationary point condition under the $W_2$ metric.
\begin{lemma}
\label{lemma: criticality}
The distribution $\rho^\ast \in \pr_2(\Omega)$ is a stationary solution to $\min_{\rho \in \pr_2(\Omega)}$ or $\max_{\rho \in \pr_2(\Omega)} F[\rho]$  --- that is,   $\partial_t\rho^\ast = 0$ --- if it satisfies the following first-order critical condition:
\begin{equation}
\label{eqn: criticality}
\nabla_\theta \frac{\delta F[\rho^*]}{\delta \rho}(\theta) = 0, \quad \forall ~\theta \in \text{\rm supp}(\rho^*).
\end{equation}
\end{lemma}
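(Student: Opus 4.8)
The plan is to verify that the first-order condition \eqref{eqn: criticality} forces the right-hand side of the Wasserstein gradient-flow equation \eqref{eqn: W2 flow} to vanish, argued in the weak (distributional) sense so that it also covers measures $\rho^\ast$ that are not absolutely continuous --- in particular the empirical measures \eqref{eqn: empirical measure} used by \Cref{alg: particle flow}. The single fact that powers the argument is the defining property of the support of a measure, namely $\rho^\ast\bigl(\Omega\setminus\mathrm{supp}(\rho^\ast)\bigr)=0$, so that any integral taken against $\rho^\ast$ is blind to the behaviour of the integrand outside $\mathrm{supp}(\rho^\ast)$.

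Concretely I would proceed as follows. (i) Recall from \eqref{eqn: W2 flow} that a solution satisfies $\partial_t\rho=\nabla_\theta\cdot\bigl(\rho\,\nabla_\theta\frac{\delta F[\rho]}{\delta\rho}\bigr)$ (the sign reversed for the D-criterion, with no effect on what follows). (ii) At $\rho=\rho^\ast$, pair this identity with an arbitrary test function $\phi\in C_c^\infty(\Omega)$ and integrate the divergence by parts to obtain
\[
\langle\,\partial_t\rho^\ast,\,\phi\,\rangle \;=\; -\int_\Omega \nabla_\theta\phi(\theta)\cdot\nabla_\theta\frac{\delta F[\rho^\ast]}{\delta\rho}(\theta)\;\dd\rho^\ast(\theta).
\]
(iii) Because the integrating measure is $\rho^\ast$, only the values of $\nabla_\theta\frac{\delta F[\rho^\ast]}{\delta\rho}$ on $\mathrm{supp}(\rho^\ast)$ enter; by the hypothesis \eqref{eqn: criticality} this gradient vanishes identically there, so the integral is $0$ for every $\phi$, hence $\partial_t\rho^\ast=0$ in the sense of distributions, i.e.\ $\rho^\ast$ is stationary. (iv) Finally I would spell out the empirical case: for $\rho^\ast=\rho_N$, condition \eqref{eqn: criticality} is precisely $\nabla_\theta\frac{\delta F[\rho_N]}{\delta\rho}(\theta_i)=0$ for every $i=1,\dots,N$, which says the descent step on Line~3 of \Cref{alg: particle flow} leaves each particle in place; substituting the closed forms \eqref{eqn: A velocity}--\eqref{eqn: D velocity} turns this into an explicitly checkable fixed-point equation for $\{\theta_i\}_{i=1}^N$.

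The part needing the most care is the legitimacy of the integration by parts, equivalently that the flux $\rho^\ast\,\nabla_\theta\frac{\delta F[\rho^\ast]}{\delta\rho}$ is a well-defined finite vector-valued measure. This requires $\frac{\delta F[\rho^\ast]}{\delta\rho}\in C^1(\Omega)$, which by \Cref{prop:gradient_flow_cont} holds as soon as $\theta\mapsto\mA(\theta,:)$ is $C^1$ and $\mA^\top\mA[\rho^\ast]$ is invertible (the standing full-rank assumption); and, if $\Omega$ has a boundary, one must either take $\Omega$ boundaryless / keep the flow inside $\Omega$, or impose the no-flux condition $\rho^\ast\,\nabla_\theta\frac{\delta F[\rho^\ast]}{\delta\rho}\cdot n=0$ on $\partial\Omega$ so the boundary term drops. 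I would also note in passing that, under the same mild conditions (e.g.\ $\Omega$ compact with no-flux boundary), the converse implication holds: testing the continuity equation against $\frac{\delta F[\rho^\ast]}{\delta\rho}$ yields $\int_\Omega\bigl|\nabla_\theta\frac{\delta F[\rho^\ast]}{\delta\rho}\bigr|^2\dd\rho^\ast=0$, forcing the gradient to vanish $\rho^\ast$-almost everywhere and hence on $\mathrm{supp}(\rho^\ast)$ by continuity.
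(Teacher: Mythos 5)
Your proof is correct, and it takes a route that is subtly but genuinely different from the paper's. The paper computes the dissipation of the objective along the flow, testing the continuity equation against $\frac{\delta F[\rho]}{\delta\rho}$ itself and integrating by parts to obtain $\frac{\dd F[\rho]}{\dd t}=-\int_\Omega\rho\,\bigl|\nabla_\theta\frac{\delta F[\rho]}{\delta\rho}\bigr|^2\dd\theta\le 0$; it then reads off that vanishing of this dissipation is equivalent to the gradient vanishing on $\mathrm{supp}(\rho^\ast)$. That argument most naturally gives the converse implication (stationarity forces criticality) and leaves the step from $\frac{\dd F}{\dd t}=0$ to $\partial_t\rho^\ast=0$ implicit. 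You instead test against an arbitrary $\phi\in C_c^\infty(\Omega)$ and verify directly that condition \eqref{eqn: criticality} annihilates the flux measure $\rho^\ast\,\nabla_\theta\frac{\delta F[\rho^\ast]}{\delta\rho}$, hence $\partial_t\rho^\ast=0$ in the distributional sense --- which is exactly the implication the lemma asserts. Your version buys three things the paper's does not make explicit: it is stated weakly and therefore applies verbatim to singular measures such as the empirical measures \eqref{eqn: empirical measure} (where the paper's Lebesgue-density notation $\rho(\theta)\,\dd\theta$ is only formal); it isolates the regularity ($C^1$ dependence of $\mA(\theta,:)$ on $\theta$ plus invertibility of $\mA^\top\mA[\rho^\ast]$) and the no-flux boundary condition actually needed to justify the integration by parts, which the paper invokes only as an assumption in passing; and your closing remark recovers the paper's dissipation identity as the converse direction, so the two arguments together give the full equivalence. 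The paper's approach, in exchange, delivers the monotone decrease of $F$ along the flow as a byproduct, which is used later in the paper to interpret the numerical experiments.
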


\begin{proof}
We present the proof for the minimization problem. (The proof for maximization is similar.)
In the Wasserstein flow of $\rho \in \pr_2(\Omega)$, the differential of the objective $F$ is 
\begin{equation}
\label{eqn: dF}
\frac{\dd F[\rho]}{\dd t} = \int_{\Omega} \frac{\delta F[\rho]}{\delta \rho} (\theta) \, \partial_t \rho(\theta) \,  \dd \theta.
\end{equation}
When we substitute for  $\partial_t \rho $ from \eqref{eqn: W2 flow}, we obtain
\begin{equation}
\label{eqn: dF 2}
\frac{\dd F[\rho]}{\dd t} = \int_\Omega \frac{\delta F [\rho]}{\delta \rho}(\theta) \, \nabla_{\theta} \, \cdot \left(\rho \nabla_\theta \frac{\delta F[\rho]}{\delta \rho} \right) \dd \theta
= - \int_\Omega \rho(\theta) \left \vert \nabla_\theta \frac{\delta F[\rho]}{\delta \rho} \right\vert^2 \, \dd \theta \leq 0.  
\end{equation}
The last derivation is from the Green's identity and the assumption that the velocity term $\nabla_\theta \frac{\delta F[\rho]}{\delta \rho}$ vanishes on the boundary $\partial \Omega.$

First-order criticality conditions for $\rho^*$ are $\frac{\dd F [\rho^*] }{\dd t} = 0$. Hence equation \eqref{eqn: dF 2} implies that the critical condition  \eqref{eqn: criticality} is required for the integrand in \eqref{eqn: dF 2} to be zero everywhere in $\Omega.$
\end{proof}

In the $W_2$ descent flow \eqref{eqn: W2 flow}, the objective $F$ keeps decreasing until $\rho$ achieves stationarity.  
(A similar claim applies to ascent in the maximization case.) The critical condition \eqref{eqn: criticality} does not give the explicit stationary measure $\rho^*$ except in special cases, one of which we present now.

\begin{proposition}
\label{ex: critical point}
Suppose that in the    {continuous experiment/data matrix $\mA$ there are a set of values $\{\theta_i \}_{i=1}^d \subset \Omega$ such that the vectors $\{\mA(\theta_i,:) \}_{i=1}^d$ are orthogonal, and in addition that}
\begin{equation}\label{assum:perp}
\nabla_\theta \mA(\theta_i,:)\perp\mA(\theta_i,:)\,,\quad  \forall i \in [d].
\end{equation}
Then the following form satisfies the A- \eqref{eqn: cont A-opt} and D-optimal \eqref{eqn: cont D-opt} design criteria:
    \begin{equation}
    \label{eqn: critical}
    \rho^{*} = \sum_{i=1}^d \alpha_i \delta_{\theta_i}, \quad \text{s.t.}~\sum_{i=1}^d \alpha_i = 1, ~~\alpha_i > 0, ~\forall i \in [d].
\end{equation}
\end{proposition}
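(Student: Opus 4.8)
The plan is to verify the first-order critical condition from \Cref{lemma: criticality} directly at the candidate measure $\rho^* = \sum_{i=1}^d \alpha_i \delta_{\theta_i}$, using the velocity formulas from \Cref{lemma: velocity}. Since $\text{supp}(\rho^*) = \{\theta_1,\dotsc,\theta_d\}$, it suffices to show that $\nabla_\theta \frac{\delta F^A[\rho^*]}{\delta \rho}(\theta)$ and $\nabla_\theta \frac{\delta F^D[\rho^*]}{\delta \rho}(\theta)$ both vanish when evaluated at each $\theta = \theta_j$, $j \in [d]$.

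The key computation is to evaluate the matrix $\mA^\top\mA[\rho^*] = \sum_{i=1}^d \alpha_i \,\mA(\theta_i,:)^\top \mA(\theta_i,:)$. Because the row vectors $\{\mA(\theta_i,:)\}_{i=1}^d$ are orthogonal and there are exactly $d$ of them in $\Real^d$, they form an orthogonal basis; writing $\mU$ for the matrix with rows $\mA(\theta_i,:)$, we have $\mU\mU^\top = \diag(\|\mA(\theta_i,:)\|^2)$, and $\mA^\top\mA[\rho^*] = \mU^\top \diag(\alpha_i) \mU$. I would then note that for any power $p$, $(\mA^\top\mA[\rho^*])^{p}$ shares the same eigvectors, so that $(\mA^\top\mA[\rho^*])^{-p}\,\mA(\theta_j,:)^\top = \left(\alpha_j \|\mA(\theta_j,:)\|^2\right)^{-p}\mA(\theta_j,:)^\top$, i.e. $\mA(\theta_j,:)^\top$ is an eigenvector of the inverse powers with eigenvalue $(\alpha_j\|\mA(\theta_j,:)\|^2)^{-p}$. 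Substituting into \eqref{eqn: A velocity} and \eqref{eqn: D velocity} (with the empirical-measure sum replaced by the weighted sum $\sum_i \alpha_i$, which is the appropriate $\mA^\top\mA[\rho^*]$), the velocities at $\theta_j$ become a scalar multiple of $\nabla_\theta\mA(\theta,:)\big|_{\theta=\theta_j}\,\mA(\theta_j,:)^\top$.

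At that point assumption \eqref{assum:perp}, namely $\nabla_\theta\mA(\theta_i,:)\perp\mA(\theta_i,:)$, finishes the argument: the product $\nabla_\theta\mA(\theta,:)\big|_{\theta=\theta_j}\,\mA(\theta_j,:)^\top$ is zero entrywise (each component of the gradient is a vector orthogonal to $\mA(\theta_j,:)$), so $\nabla_\theta \frac{\delta F^A[\rho^*]}{\delta \rho}(\theta_j) = 0$ and likewise for $F^D$. Since this holds for every $j$, the critical condition \eqref{eqn: criticality} is satisfied on $\text{supp}(\rho^*)$, and \Cref{lemma: criticality} gives that $\rho^*$ is a stationary solution for both criteria.

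The main obstacle — really the only delicate point — is bookkeeping the orthogonality carefully: making precise that ``$\mA(\theta_j,:)^\top$ is an eigenvector of $(\mA^\top\mA[\rho^*])^{-1}$'' requires the $\{\mA(\theta_i,:)\}$ to be a full orthogonal basis of $\Real^d$ (so that $\mA$ having full column rank is used), and one must track the scalar eigenvalues $\alpha_j\|\mA(\theta_j,:)\|^2$ through the inverse and its square without sign or exponent errors. One should also be slightly careful about the meaning of $\nabla_\theta\mA(\theta,:)$ as a matrix/tensor when $\Omega$ is multidimensional, but the conclusion is unaffected since \eqref{assum:perp} is assumed componentwise in the $\theta$-variables. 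No genuinely hard estimate is involved; the result is essentially a consequence of simultaneous diagonalizability plus the geometric assumption \eqref{assum:perp}.
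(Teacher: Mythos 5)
Your proposal is correct and follows essentially the same route as the paper's proof: diagonalize $\mA^\top\mA[\rho^*]$ via the orthogonal rows so that $\mA(\theta_j,:)^\top$ is an eigenvector of its inverse powers with eigenvalue $(\alpha_j\|\mA(\theta_j,:)\|^2)^{-p}$, reduce the velocity at each support point to a scalar multiple of $\nabla_\theta\mA(\theta_j,:)\,\mA(\theta_j,:)^\top$, and conclude from assumption \eqref{assum:perp}. The paper carries this out by explicitly normalizing the rows into an orthogonal matrix $\mU$ and writing $\mA^\top\mA[\rho^*]=\mU^\top\mC\mU$, but that is only a cosmetic difference from your eigenvector phrasing.
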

\begin{proof}
By rescaling the orthogonal rows $\mA(\theta_i,:)$ for $i \in [d]$ we can define an orthogonal matrix $\mU \in \Real^{d \times d}$ with rows $\mU(i,:)$ defined by
\begin{equation}
\label{eqn: normalization}
\mU(i,:) = \frac{\mA(\theta_i,:)}{\|\mA(\theta_i,:) \|}, \quad \forall i \in [d].
\end{equation}
For the sampled target by $\rho^*$, we obtain from this formula and \eqref{eqn:A_T_A_w} that
\[
\mA^\top \mA [\rho^*] = \sum_{i=1}^d \alpha_i \mA(\theta_i,:)^\top \mA(\theta_i,:) = \mU^\top \mC \mU,
\]
where $\mC \in \Real^{d \times d}$ is a diagonal matrix with $i$th diagonal $\mC(i,i) = \alpha_i \| \mA(\theta_i, :) \|^2$.
We thus have by orthogonality of $\mU$ that 
\[
(\mA^\top \mA [\rho^*])^{-1} = \mU^\top \mC^{-1} \mU, \quad 
(\mA^\top \mA [\rho^*])^{-2} =  \mU^\top \mC^{-2} \mU.
\] 

For $\theta_i \in \text{supp} (\rho^*),$  the A-optimal derivative is
\begin{eqnarray}
\nonumber
\nabla_\theta \frac{\delta F^A [\rho^{*}] }{\delta \rho} (\theta_i) & =  & 2\nabla_\theta \mA(\theta_i,:) (\mA^\top \mA [\rho^{*}] )^{-2} \mA(\theta_i,:)^\top \\
\label{eqn:sh1}
& = & 2\nabla_\theta \mA(\theta_i,:) \mU^\top \mC^{-2} \mU \mA(\theta_i,:)^\top.
\end{eqnarray}
Since $\{\mA(\theta_i,:)\}_{i=1}^d$ have orthogonal rows, we have from \eqref{eqn: normalization} that
\[
\mC^{-2} \mU \mA(\theta_i,:)^\top 
= \| \mA(\theta_i,:) \|  \mC^{-2} \mU \mU(i,:)^\top 
= \frac{\| \mA(\theta_i,:) \| }{\mC(i,i)^2} \ve_i,
\]
{where $\ve_i$ is the $i$-the unit vector in $\Real^d$.}
By substituting into \eqref{eqn:sh1} and using \eqref{eqn: normalization} again, we obtain
\begin{eqnarray*}
\nabla_\theta \frac{\delta F^A [\rho^{*}] }{\delta \rho} (\theta_i) & 
  =  & 2\nabla_\theta \mA(\theta_i,:) \mU^\top \ve_i \frac{\| \mA(\theta_i,:)\|}{\mC(i,i)^2} \\
&  =  & 2\nabla_\theta \mA(\theta_i,:) \mU(i,:)^\top \frac{\| \mA(\theta_i,:)\|}{\mC(i,i)^2}\\
&  =  & 2\nabla_\theta \mA(\theta_i,:) \mA(\theta_i,:)^\top \frac{1}{\mC(i,i)^2}\\
&= &0,
\end{eqnarray*}
where the final equality follows from \eqref{assum:perp}.
A similar argument shows that $\nabla_\theta \frac{\delta F^D [\rho^{*}] }{\delta \rho} (\theta_i)=0$.

Since the gradient of the Fr\'{e}chet derivative is $0$ for all support points $\theta_i$, $\rho^{*}$ satisfies the first-order criticality condition \eqref{eqn: criticality}. 
\end{proof}

\subsection{Convexity of design objectives}
\label{subsec: convexity}

Another feature of the OED problems \eqref{eqn: cont A-opt}–\eqref{eqn: cont D-opt} is that they are linear convex/concave, in the sense that, for any $\rho_0$ and $\rho_1$, we have:
\[
F^A[(1-s)\rho_0+s\rho_1]\leq (1-s)F^A[\rho_0]+sF^A[\rho_1]\,,\quad F^D[(1-s)\rho_0+s\rho_1]\geq (1-s)F^D[\rho_0]+sF^D[\rho_1]\,.
\]
One way to show these properties is to evaluate the sign of the Hessian term. If for any given $\rho\in\pr_2(\Omega)$ and any admissible signed measure direction $\eta$ with $\eta(\Omega)=0$ to ensure $\rho+\epsilon\eta\in\pr_2(\Omega)$, one has the inequality $\left.\frac{\dd^2}{\dd\epsilon^2}F[\rho+\epsilon\eta]\right|_{\epsilon=0}\geq (\leq)0$, then $F$ is linear convexity(concavity). We will show this by directly computing the Hessian term in the following proposition.

\begin{proposition}
\label{prop: convexity}
For any $\rho$ that ensures invertibility of the matrix $\mA^\top \mA [\rho]$ defined in \eqref{eqn: sampled ATA}, the objective functionals for both A-optimal and D-optimal defined in~\eqref{eqn: cont A-opt} and~\eqref{eqn: cont D-opt} are second order directional differentiable, with the Hessian functionals $\hess~ F^A[\rho]$ and $\hess~ F^D[\rho]$ being semidefinite operators (positive and negative, respectively).
\end{proposition}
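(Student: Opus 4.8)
The plan rests on one structural observation: the map $\rho \mapsto M[\rho] := \mA^\top\mA[\rho] = \int_\Omega \mA(\theta,:)^\top\mA(\theta,:)\,\dd\rho(\theta)$ of \eqref{eqn: sampled ATA} is \emph{linear} in $\rho$, and both objectives are smooth scalar functions of this single matrix: $F^A[\rho] = \tr\!\big(M[\rho]^{-1}\big)$ and $F^D[\rho] = \log\det\!\big(M[\rho]\big)$. On the open cone of $d\times d$ symmetric positive-definite matrices, $M\mapsto\tr(M^{-1})$ and $M\mapsto\log\det M$ are real-analytic; since we assume $M[\rho]$ lies in that cone, precomposing with the linear map $\rho\mapsto M[\rho]$ yields functionals that are twice (in fact infinitely) Fréchet-differentiable along any admissible perturbation, which settles the second-order differentiability claim. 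It also gives convexity/concavity "for free", since $\tr(M^{-1})$ is convex and $\log\det M$ is concave on the positive-definite cone and precomposition with a linear map preserves these properties. To match the statement about the Hessian \emph{operators}, I would in addition produce them explicitly, by differentiating the first-order Fréchet derivatives of \Cref{prop:gradient_flow_cont} a second time.

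Concretely, I would fix $\rho$ with $M := M[\rho]\succ 0$ and a perturbation $\delta\rho$ with $\int_\Omega\dd\delta\rho = 0$, and set $B := M[\delta\rho] = \int_\Omega \mA(\theta,:)^\top\mA(\theta,:)\,\dd\delta\rho(\theta)$, which is symmetric but not sign-definite. Using $\frac{\dd}{\dd t}M[\rho+t\,\delta\rho]\big|_{t=0} = B$ (by linearity) together with $\frac{\dd}{\dd t}M^{-1} = -M^{-1}\dot M M^{-1}$ and $\frac{\dd}{\dd t}M^{-2} = -M^{-1}\dot M M^{-2} - M^{-2}\dot M M^{-1}$, I would differentiate \eqref{eqn: A derivative} and \eqref{eqn: D derivative} in the direction $\delta\rho$, pair the result against $\delta\rho$, and collapse the $\theta$-integral via $\int_\Omega \mA(\theta,:)\,C\,\mA(\theta,:)^\top\,\dd\delta\rho(\theta) = \tr\!\big(C\,M[\delta\rho]\big) = \tr(CB)$ (exactly as in the proof of \Cref{prop:gradient_flow_cont}). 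This gives the closed forms
\begin{equation*}
\hess F^A[\rho](\delta\rho,\delta\rho) = 2\,\tr\!\big(M^{-2}BM^{-1}B\big), \qquad \hess F^D[\rho](\delta\rho,\delta\rho) = -\,\tr\!\big(M^{-1}BM^{-1}B\big),
\end{equation*}
where for $F^A$ cyclicity of the trace is used to merge the two terms coming from $\frac{\dd}{\dd t}M^{-2}$.

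The signs then follow from a congruence step: writing $B = M^{1/2}\widehat{B}M^{1/2}$ with $\widehat{B} := M^{-1/2}BM^{-1/2}$ symmetric gives $M^{-1}B = M^{-1/2}\widehat{B}M^{1/2}$, hence $M^{-1}BM^{-1}B = M^{-1/2}\widehat{B}^2 M^{1/2}$, so $\tr(M^{-1}BM^{-1}B) = \tr(\widehat{B}^2) = \|\widehat{B}\|_F^2 \ge 0$ and likewise $\tr(M^{-2}BM^{-1}B) = \tr(M^{-1}\widehat{B}^2) = \|M^{-1/2}\widehat{B}\|_F^2 \ge 0$ (using $M^{-1}\succ 0$ and $\widehat{B} = \widehat{B}^\top$). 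Therefore $\hess F^A[\rho]\succeq 0$ and $\hess F^D[\rho]\preceq 0$ as operators on perturbations, i.e.\ $F^A$ is convex and $F^D$ concave in the $L_2$ sense, so \eqref{eqn: cont A-opt} and the maximization \eqref{eqn: cont D-opt} are convex optimization problems along the linear interpolations $(1-t)\rho_0 + t\rho_1\in\pr_2(\Omega)$. I expect the only delicate points to be bookkeeping: keeping the two chain-rule terms for $M^{-2}$ and their signs straight, and making precise that "convex in the $L_2$ sense" means convexity along these interpolations (on which $M[\rho]$ varies linearly). There is no deeper obstacle, since the linearity of $\rho\mapsto M[\rho]$ carries the whole argument.
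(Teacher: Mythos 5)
Your proposal is correct and follows essentially the same route as the paper: exploit linearity of $\rho\mapsto \mA^\top\mA[\rho]$, expand the Fr\'echet derivative to first order in $\varepsilon$, collapse the $\theta$-integral to a trace, and verify the sign of the resulting bilinear form; your closed forms $2\tr(\mM^{-2}\mD\mM^{-1}\mD)$ and $-\tr(\mM^{-1}\mD\mM^{-1}\mD)$ agree with the paper's \eqref{eqn: hessian A form} and its D-analogue up to cyclic permutation, and your congruence argument via $\mM^{-1/2}\mD\mM^{-1/2}$ is the same device the paper uses for the D-case (it uses an equivalent quadratic-form argument for the A-case). Your opening observation --- that convexity/concavity already follows from convexity of $\tr(\mM^{-1})$ and concavity of $\log\det\mM$ on the positive-definite cone precomposed with a linear map --- is a cleaner high-level justification that the paper does not state, but it supplements rather than replaces the shared computation.
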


\begin{proof}
Fix a probability distribution $\rho \in \pr_2(\Omega),$ we will explicitly compute the two Hessian terms. For any given two signed measures $\eta_1, \eta_2: \Omega \to \Real$ ($\eta_1(\Omega)=\eta_2(\Omega)=0$) that live on the ambient space of $\pr_2$. The bilinear Hessian operator is computed by:
\begin{eqnarray}
\nonumber
&& \hess~ F[\rho](\eta_1, \eta_2) \\
\label{eqn: def_hess_original}
& = & \lim_{\epsilon\to0}\frac{(F[\rho+ \varepsilon\eta_1 + \varepsilon\eta_2] - F[\rho+ \varepsilon\eta_2]) - (F[\rho+ \varepsilon\eta_1 ] - F[\rho])}{\varepsilon^2}\\
\nonumber
& = & \lim_{\varepsilon\to0}\frac{1}{\varepsilon}\left( \int_\Omega \frac{\delta F[\rho + \varepsilon\eta_2 ]}{\delta \rho} \dd \eta_1 (\theta)   - \int_\Omega \frac{\delta F[\rho  ]}{\delta \rho} \dd \eta_1 (\theta) \right)\,.
\end{eqnarray}
The last derivation is from~\eqref{eqn: Frechet derivative}.

For positive definiteness, we need $\hess~ F[\rho](\eta, \eta)\geq0$ for all $\eta$. Since the sign is retained when passing to the limit, we will study the first-order expansion of
\begin{equation}
\label{eqn: hessian def}
\int_\Omega \frac{\delta F[\rho + \varepsilon\eta ]}{\delta \rho} \, \dd \eta (\theta)   - \int_\Omega \frac{\delta F[\rho  ]}{\delta \rho} \, \dd \eta (\theta) \geq0, \quad \forall~\varepsilon \in \Real.
\end{equation}

Regarding \eqref{eqn: sampled ATA}, we  define the following shorthand notation:
\begin{equation}
\label{eqn: short-hand notation}
\mM := \mA^\top \mA [\rho] \in \Real^{d \times d}, \quad 
\mD := \mA^\top \mA [\delta \rho] \in \Real^{d \times d}.
\end{equation}
(Note that the theorem assumes positive definiteness of $\mM$.)

We first study the A-optimal design objective \eqref{eqn: cont A-opt}. For the first term in \eqref{eqn: hessian def}, we deploy \eqref{eqn: A derivative} from \ref{prop:gradient_flow_cont} to obtain
\begin{equation*}
\int_\Omega \frac{\delta F^A[\rho + \varepsilon\eta ]}{\delta \rho} \, \dd \eta (\theta) 
= -\int_\Omega \mA(\theta,:) (\mA^\top \mA [\rho + \varepsilon \eta] )^{-2} \mA(\theta,:)^\top   \dd \eta (\theta) \,.
\end{equation*}
Since the integrand is a scalar, it is equivalent to its trace value. 
Thus the expression above is equal to
\begin{eqnarray*}
&&  -\int_\Omega \tr\big( \mA(\theta,:) (\mA^\top \mA [\rho + \varepsilon \eta] )^{-2} \mA(\theta,:)^\top\big)\dd \eta (\theta)\\
&=& - \int_\Omega\tr \big((\mA^\top \mA [\rho + \varepsilon \eta])^{-2} \mA(\theta,:)^\top   \mA(\theta,:)\big)\dd \eta (\theta)\\
&=&- \tr \Big(\int_\Omega (\mA^\top \mA [\rho + \varepsilon \eta])^{-2} \mA(\theta,:)^\top   \mA(\theta,:)\dd \eta (\theta)\Big)\\
&=& - \tr \Big( (\mA^\top \mA [\rho + \varepsilon \eta])^{-2}\int_\Omega \mA(\theta,:)^\top   \mA(\theta,:)\dd \eta (\theta) \Big)\\
&=& - \displaystyle \tr\big((\mA^\top \mA [\rho] + \varepsilon\mA^\top \mA [\eta] )^{-2} \mA^\top\mA [\eta]\big). 
\end{eqnarray*}
The second line follows from the cyclic property of matrix product in the trace operation.
The order of trace and integration can be switched as both are linear operators (third line). 
The last line follows from linear expansion of the $\mA^\top \mA$ operator.

Using the matrix notation in \eqref{eqn: short-hand notation}, we obtain 
\begin{equation*}
\int_\Omega \frac{\delta F^A[\rho + \varepsilon\eta ]}{\delta \rho}  \, \dd \eta (\theta)
= -\tr\left( (\mM + \varepsilon\mD)^{-2} \mD\right).
\end{equation*}
From the first-order approximation
\[
(\mM + \varepsilon \mD)^{-2} \approx \mM^{-2} - \varepsilon\mM^{-2}\mD\mM^{-1}- \varepsilon\mM^{-1}\mD\mM^{-2},
\]
we can write (eliminating the second order expansion on $\varepsilon$):
\begin{eqnarray}
 \int_\Omega \frac{\delta F^A[\rho + \varepsilon\eta ]}{\delta \rho}\, \dd \eta (\theta) 
\label{eqn: hessian A1} 
& \approx & - \tr\left(\left(\mM^{-2} - \varepsilon\mM^{-2}\mD\mM^{-1}- \varepsilon\mM^{-1}\mD\mM^{-2}\right) \, \mD\right)
\\
\nonumber
& = &  -\tr\left(\mM^{-2} \mD - 2\varepsilon \mM^{-1} \mD\mM^{-1}\mD\mM^{-1}\right),
\end{eqnarray}
which gives the first term of \eqref{eqn: hessian def}.
For the second term in \eqref{eqn: hessian def}, we have
\begin{equation}
\label{eqn: hessian A2}
\int_\Omega \frac{\delta F^A[\rho]}{\delta \rho}  \, \dd \eta (\theta) =-\tr\big((\mA^\top \mA [\rho])^{-2} \mA^\top \mA [\eta]\big)
=- \tr\left(\mM^{-2} \mD \right).
\end{equation}
By combining  \eqref{eqn: hessian A1} and \eqref{eqn: hessian A2} into  \eqref{eqn: def_hess_original}, we obtain the A-optimal Hessian formula:
\begin{equation}
\label{eqn: hessian A form}
\begin{array}{ll}
\hess~F^A[\rho] (\eta, \eta) & = \displaystyle 2\tr\left(\mM^{-1} \mD\mM^{-1}\mD\mM^{-1}\right).
\end{array}
\end{equation} 

We now prove that this Hessian operator is positive semidefinite by proving nonnegativity of \eqref{eqn: hessian A form}. 
By definition, the matrix
\[
\mM = \mA^\top \mA [\rho] = \int_\Omega \mA(\theta,:) \mA(\theta,:)^\top \dd \rho(\theta)  \in \Real^{d \times d}
\]
is positive definite, so $\mM^{-1}$ is also positive definite.  It follows that for any vector $\vz$, we have
\[
\vz^T\mM^{-1} \mD \mM^{-1} \mD \mM^{-1} \vz = (\mD \mM^{-1} \vz )^\top \mM^{-1} (\mD \mM^{-1} \vz) \ge 0,
\]
so $\mM^{-1} \mD \mM^{-1} \mD \mM^{-1}$ in \eqref{eqn: hessian A form} is positive semidefinite, as required.
The Hessian value \eqref{eqn: hessian A form} is non-negative, so $F^A[\cdot]$ is a convex functional. 

For $F^D[\cdot]$ defined in  \eqref{eqn: cont D-opt}, we have
\begin{equation}
\label{eqn: hessian D1}
\begin{array}{ll}
\displaystyle \int_\Omega \frac{\delta F^D[\rho + \varepsilon\eta ]}{\delta \rho}  \, \dd \eta (\theta) 
& \displaystyle = \tr\left( (\mM + \varepsilon\mD)^{-1}\mD\right)\\
& \displaystyle = \tr\left( \left(\mM^{-1} -\varepsilon\mM^{-1} \mD \mM^{-1} \right) \mD\right)+O(\varepsilon^2),
\end{array}
\end{equation}
and
\begin{equation}
\label{eqn: hessian D2}
\int_\Omega \frac{\delta F^D[\rho]}{\delta \rho} \, \dd \eta (\theta) 
=\tr\left(\mM^{-1} \mD \right).
\end{equation}
By combining \eqref{eqn: hessian D1} and \eqref{eqn: hessian D2}, we obtain
\[
\hess~F^D[\rho] (\eta, \eta) = -\tr\left(\mD\mM^{-1}\mD\mM^{-1}\right).
\]
Using an argument similar  to the one for $F^A$,  we can show that the D-optimal Hessian value 
\[
-\tr\left(\mD\mM^{-1}\mD\mM^{-1} \right) = -\tr\big( (\mM^{-1/2} \mD \mM^{-1/2})(\mM^{-1/2} \mD \mM^{-1/2})\big) \leq 0,
\]
for all $\eta$, where the nonnegativity of the trace follows from symmetry of $\mD$ and $\mM$ and thus of $\mM^{-1/2} \mD \mM^{-1/2}$.
Therefore, the D-optimal objective Hessian operator is negative semidefinite for measure $\rho$.
\end{proof}
We comment that the convexity of the objective function $F^A$ is shown in the linear fashion. This convexity property does not directly imply the geometric convexity in $W_2$.\footnote{For two measures $u, v \in \pr_2(\Omega),$ the $W_2$ space considers the displacement convexity notion (Section 3 of \cite{S15}), i.e.: 
$
F(T_t(\mu,\nu))\leq tF(\mu) +(1-t)F(\nu),
$
where the Wasserstein geodesic $T_t(\mu,\nu)$ replaces the classical convex interpolation: $T_t(\mu,\nu) = t\mu+(1-t)\nu$.} As a consequence, Proposition~\ref{prop: convexity} cannot be directly applied to show the convergence of the Wasserstein gradient flow \eqref{eqn: W2 flow}. Transferring the positive definite Hessian to the Wasserstein convergence requires more detailed undertaking that is beyond the current paper. We mention a few successful efforts~\cite{suzuki2023uniformintime,Chen_2025} carried out in different settings. (Similar comments apply regarding the concavity property of $F^D$.) Numerical observations presented in Section~\ref{sec: numerics} show that gradient flow can converge to different local optima when started from different initial points.

\subsection{Particle gradient flow simulation error}
\label{subsec: simulation error}
We turn now from examining the properties of the continuous OED formulation to the performance of Algorithm~\ref{alg: particle flow}. Multiple layers of numerical approximations are deployed in the algorithm, and rigorous convergence analysis is somewhat convoluted. 
Rather than present such an analysis fully, we identify the main source for the numerical error and provide a possible roadmap for a convergence analysis. We propose a conjecture about the convergence behavior, leaving detailed analysis to future research.

The error we aim to control is the difference between the global optimizer $\rho^*$, defined in~\eqref{eqn: cont A-opt} or \eqref{eqn: cont D-opt}, and the output of Algorithm~\ref{alg: particle flow} $\rho^T_{N,\dd t}$. 
(We have added subscript $\dd t$ to stress the dependence on time-discretization in the algorithm.) 
Since Wasserstein provides a metric that honors the triangle inequality, we deduce that
\begin{equation}
\label{eqn: convergence error}
W_2(\rho^*, \rho^T_{N, \dd t}) \leq W_2(\rho^*, \rho^T) +  W_2(\rho^T, \rho^T_{N}) + W_2(\rho^T_{N}, \rho^T_{N, \dd t}),
\end{equation}
where $\rho^T$ is the solution to the Wasserstein gradient flow \eqref{eqn: W2 flow} at time $T$ and $\rho^T_N$ denotes the particle approximation of $\rho^T$ using~\eqref{eqn: empirical measure}.
We expect all three terms are controllable under certain scenarios.
\begin{enumerate}
\item When the problem is geodesically convex, we expect $\rho^T \to \rho^*$ as $T \to \infty$. The nature of this convergence will be problem-dependent.
\item Replacing $\rho$ by $\rho_N$ amounts to replacing the continuous-in-space PDE by a finite number of samples. Intuitively, the more samples one pays to simulate the underlying flow, the more accurate the PDE solution becomes. Rigorously evaluating the difference is the main theme of mean-field analysis~\cite{FG15}. When the gradient flow is Lipschitz-smooth, it is expected that $\rho^T_N \xrightarrow{W_2} \rho^T$ as $N \to \infty$, with a potential rate of
\begin{equation}
\label{eqn: err est}
\E\, \left[W_2(\rho^T, \rho^T_{N})\right] \sim O \left(\frac{1}{N^\alpha} \right)\,,
\end{equation}
for $\alpha=\min\{2/\text{dim}(\Omega),1/2\}$. This convergence rate may be pessimistic in the sense that the rate is slow when $\text{dim}(\Omega)$ is high.

\item The discrepancy between $\rho^T_{N,\dd t}$ and $\rho^T_N$ is due to the discrete time stepping scheme. 
Following standard analysis of Euler's method \cite{A08}, the convergence is $\rho_{N,\dd t}^T \xrightarrow{W_2} \rho_N^T,$ as $\dd t \to 0$, with rate $W_2(\rho^T_{N} , \rho^T_{N, \dd t}) \sim O(\dd t)$ (see \cite[p.~69]{B16}).
\end{enumerate}
A similar analysis could be conducted for metrics other than the $W_2$ distance, such as TV norm or $\phi$-divergence (such as the KL divergence).    {We can also measure the weak convergence on a test function, possibly tightening the convergence in~\eqref{eqn: err est}. For any given $\psi\in C_c^\infty$, $\langle\rho^T-\rho^T_N,\psi\rangle\sim\frac{1}{\sqrt{N}}$. We refer to~\cite{PC19} for relations between different metrics.}

The error analysis allows us to be more explicit in calculating the computational complexity. In Algorithm~\ref{alg: particle flow}, we have noted that there are $T$ iterations, and in each iteration, $N$ particles are updated using the gradient information, a term that can be computed using either~\eqref{eqn: A velocity} for the  A-optimal measure or~\eqref{eqn: D velocity} for the D-optimal measure. 
Fortunately, the quantity $( \frac{1}{N} \sum_{i=1}^N \mA(\theta_i,:)^\top \mA(\theta_i,:))^{-1} \in \mathbb{R}^{d \times d}$ is shared by all simulated particles and thus needs to be computed just once per iteration. 
We summarize the cost per iteration here:
\begin{itemize}
    \item Computing $\sum_{i=1}^N \mA(\theta_i,:)^\top \mA(\theta_i,:)$: $O(d^2 N)$ cost;
    \item Computing the above matrix's inverse: $O(d^3)$;
    \item Assembling the particle velocity \eqref{eqn: A velocity} or \eqref{eqn: D velocity} (independently for $N$ particles): $O(Nd^2)$.
\end{itemize}
The total cost is thus $O((d^2N+d^3)T)$. 
The number of iterations $T$ is problem specific, depending on the convergence rate. 
The choice of $N$ depends on the mean-field rate, while $d$ is determined by the problem. 
If the required error tolerance is denoted by $\tau$ and  convergence in time is exponential, we have $T=|\ln\tau|$. 
If the metric of convergence is {\em weak} convergence, we have $N>\frac{1}{\tau^2}$ per Monte Carlo convergence rate. 
Under these conditions, the full cost is
\[
O\left(\left(\frac{d^2}{\tau^2}+d^3\right)|\ln\tau|\right)\,.
\]
For moderate $d$ and small error tolerance, the biggest computational bottleneck is to assemble a large number of gradients.

\section{Optimal design model problem}
\label{sec: setup}
We demonstrate the optimal design setup for the case of of electrical impedance tomography (EIT) \cite{B02,U09}, a well-studied application from medical imaging. This section provides the preparation, including the inverse problem model and the OED setup, to support the subsequent OED tests on this model.

\subsection{EIT inverse problem and its linearization} 
The EIT experiment considers injection of a voltage into biological tissue and measurement of the electrical intensity on the surface (skin).
The problem is to infer the coefficient $\sigma$ in an inhomogenous elliptic equation from boundary measurements (Dirichlet and Neumann).
It is typically assumed that the biological tissue is close to a ground-truth medium, so linearization~\cite{C06} can be performed to recover the deviation from this ground-truth. 
The linearized problem solves the following equation for $\sigma$:
\begin{equation}
\label{eqn:fredholm_eit}
\int_{\mathcal{D}} r_\theta(y)\, \sigma(y) \dd y = \text{data}_\theta\,,
\end{equation}
where $r_\theta: \mathcal{D} \to \Real$ is a representative function. 
That is, when $r_\theta(y)$ is tested on $\sigma(y)$, it produces the data on the right hand side. 
The hope is that as one exhausts values of $\theta$, the testing function $r_\theta(y)$ spans the entire space $L_2(\mathcal{D})$, and the Fredholm first-kind integral problem \eqref{eqn:fredholm_eit} yields a unique reconstruction of $\sigma(y)$ in its dual space, which is also $L_2(\mathcal{D})$.

For this particular problem, the representative function $r_\theta(y)$ can be written explicitly as
\begin{equation}
r_\theta(y)= \nabla_{y} u(\theta_1, y) \cdot \nabla_{y} v(\theta_2, y),
\end{equation}
where $\theta=(\theta_1,\theta_2)$ represents the design point  and $u$ and $v$ solve the following forward and adjoint equations, respectively:
\begin{equation}
\label{eqn: forward-adjoint}
\begin{array}{l}
\displaystyle 
\text{forward~model~(voltage)}:~ \left\{ 
\begin{array}{l}
\nabla_y \cdot (\sigma \nabla_y u) = 0, ~~ y \in \mathcal{D}\\
u \vert_{\partial \mathcal{D}} = \mathbbm{1}_{\theta_1},
\end{array}
\right. \\\\
\displaystyle \text{adjoint~model~(intensity)}:~
\left\{ 
\begin{array}{l}
\nabla_y \cdot (\sigma \nabla_y v) = 0, ~~ y \in \mathcal{D} \\
v \vert_{\partial \mathcal{D}} = \mathbbm{1}_{\theta_2}
\end{array}
\right.
\end{array}
\end{equation}
Physically, this equation describes a voltage being applied at $\theta_1 \in \partial \mathcal{D}$ with electrical intensity collected at point $\theta_2 \in \partial\mathcal{D}$. The data on the right hand side of~\eqref{eqn:fredholm_eit} is the recording of this electrical intensity. 
The design space is therefore
\[
(\theta_1,\theta_2)\in\partial\mathcal{D}^2 =: \Omega.
\]

We set the computational domain $\mathcal{D}$ to be a unit disk in $\Real^2,$ so the boundary $\partial \mathcal{D}$ is a unit circle. We parameterize $\partial\mathcal{D}$ using $\theta_{1}, \theta_2 \in[0,2\pi]$, so that $\Omega=\partial\mathcal{D}^2=[0,2\pi]^2$ and is equipped with the standard 2D Lebesgue measure.
For a finite parametrization of the unknown medium $\sigma$, we discretize the integration domain $\mathcal{D}$, representing it in terms of a discrete mesh with a standard triangulation. Assuming continuous piecewise linear functions, the medium $\sigma$ is then represented by a finite-dimensional vector, with each entry corresponding to a value at a mesh vertex.
Using numerical quadrature, we reduce~\eqref{eqn:fredholm_eit} to  a linear system $\mA\vx = \vb$, where the vector $\vb$ takes on the value of $\text{data}_\theta$ from~\eqref{eqn:fredholm_eit}. 
The continuous experiment/data  matrix $\mA$ has $d$ columns, with rows  indexed by a particle pair $(\theta_1, \theta_2) \in \partial \mathcal{D}^2$, that is,
\begin{equation}
\label{eqn: EIT measurement}
\underbrace{[\dots \nabla_y u(\theta_1, y_j) \cdot\nabla_y v(\theta_2, y_j)\dots]}_{\mA\left(\left(\theta_1,\theta_2\right), :\right)}
\underbrace{\left[ 
\begin{array}{c}
     \vdots  \\    
    \sigma( y_j) \, \Delta y_j \\    
     \vdots     
\end{array}
\right]}_{\vx}.
\end{equation}
Note that $u$ and $v$ are solutions to equations parameterized by $\sigma$, so different values of the ground truth $\sigma$ would lead to different matrices $\mA$.

\begin{remark}
To prepare the continuously indexed matrix $\mA$, we discretize the boundary domain $\partial \mathcal{D}$ into a finite collection of nodes and simulate the forward and adjoint models \eqref{eqn: forward-adjoint} with these discretized boundary conditions. 
For particles $(\theta_1, \theta_2)$ between the nodes, we use linear interpolation to approximate the solutions $u(\theta_1, :)$ and $v(\theta_2, :).$ For example, when $\theta_1 \in [\theta_{1}^L, \theta_{1}^R]$, where  $\theta_{1}^L$ and  $\theta_{1}^R$ represent the nearest nodes from the discretization that are left and right of $\theta_1$, respectively, we approximate the forward model solution by
\[
u(\theta_1,:) = \frac{\theta_{1}^R - \theta_1}{\theta_{1}^R - \theta_{1}^L} u\left(\theta_{1}^L, :\right) + \frac{\theta_{1} - \theta_{1}^L}{\theta_{1}^R - \theta_{1}^L} u(\theta_{1}^R,:)\,. 
\]
\end{remark}

\subsection{EIT optimal design in the linearized setting}
The design problem associated with the EIT example is to find the optimal sensor placement that  coordinates voltage injection on $\theta_1$ with electricity measurement $\theta_2$ on the surface $\partial \mathcal{D}$. 
Mathematically, we solve for a bivariate probability distribution $\rho(\theta_1, \theta_2)$ with $(\theta_1,\theta_2)\in\partial\mathcal{D}^2$ that optimizes the OED criteria \eqref{eqn: cont A-opt} and \eqref{eqn: cont D-opt}.

For our tests, we consider two cases, where the ground-truth media $\sigma: \mathcal{D} \to \Real$ is homogeneous in the first case and inhomogeneous in the second case.
\begin{enumerate}
\label{list}
    \item Homogeneous media: 
    \begin{equation}   
    \label{eqn: homo}
    \sigma(y) \equiv c>0, \quad \forall y \in \mathcal{D}.
    \end{equation}
    \item Inhomogeneous media:  
    \begin{equation}
    \label{eqn: inhomo} 
   \displaystyle \sigma(y) = 
    7+ 50\,\text{exp}\left\{ -\frac{(y(1) -\frac{1}{3})^2 +(y(2) -\frac{1}{3})^2}{2\,(\frac{1}{10})^2} \right\},  \quad \forall y \in \mathcal{D}. 
 \end{equation}
    \end{enumerate}
These media layouts are depicted in the first row of Figure~\ref{fig: landscape_A}. 
The associated data matrices $\mA$ \eqref{eqn: linear model} are denoted by $\mA_{\hom}$ and $\mA_{\ihom}$, respectively.

We apply the finite-element method for EIT discretization and simulation to obtain the continuous experiment/data matrix of \eqref{eqn: EIT measurement}. 
The number of design nodes $\theta$ on $\partial \mathcal{D}$ is $n=200$, equally spaced on the unit circle with angular gap of $2\pi/200$. The number of the interior nodes in the domain $\mathcal{D}$ is $d=20$, which is the number of unknown parameters required to specify the media $\sigma$.
The finite matrix $\mA$ \eqref{eqn: linear model} therefore has dimensions $200^2 \times 20$. We compute the derivatives $\partial_{\theta_1} \mA, \partial_{\theta_2} \mA$ using forward finite differences. Across all the numerical tests below in Sections~\ref{sec: numerics} and~\ref{sec: new numerics}, for the realization of a probability distribution $\rho$, we sample $N = 10000$ particle pairs $(\theta_1, \theta_2)$ from design space $\partial \mathcal{D}^2$.
All our figures show averaged results from 10 independent simulations.  

To prepare to run OED, we first analyze the landscapes of the objective functions \eqref{eqn: cont A-opt} and \eqref{eqn: cont D-opt},  plotting $F^A$ and $F^D$ as functions of $\rho$. 
To do this, we first need to confine $\rho$ to a parameterized subset. Define a sequence of $\rho_L$ (parameterized by $L$) as
\begin{equation}
\label{eqn:rho_parameter_plot}
\rho_L := U_{\{|\theta_1-\theta_2|\leq U[0, L]\}}
\end{equation}
(a uniform distribution over the set $|\theta_1-\theta_2|\leq L$).

In Figure~\ref{fig: landscape_D}, we plot $F^A[\rho_L]$ and $F^D[\rho_l]$ as a function of $L$ for different media. The result is quite revealing even in this simple setup. We first compare (A) and (B) in Figure~\ref{fig: landscape_D}, the landscapes for different media. For uniform media, $F^A$ is entirely above the line predicted by uniform distribution, and reaches its minimum at $L=\pi$, where $\rho_L$ coincides with the uniform distribution. This aligns with our intuition: For homogeneous media, sensors and sources should be placed in an homogeneous fashion. When media is inhomogeneous, however, $F^A$ is entirely below the line predicted by uniform sampling, meaning uniform sampling is the worst distribution in the class considered. Furthermore, $F^A$ picks its minimum at $L={\pi}/{4}$, suggesting the optimal $\rho$ should have sensor and source closely aligned in a window not more than ${\pi}/{4}$ apart.

Figure~\ref{fig: landscape_D} (C) depicts $F^D[\rho_L]$ as a function of $L$. For $F^D$, we aim to find the maximum value, and in this class of probabilities, the optimal solution is $L=0$, meaning the voltage injection and intensity measurements are placed exactly at the same location. This is in stark contrast with panels (A), (B), highlight the fact that different objective criteria can produce significantly different solutions.

\begin{figure}
\centering
\subfloat[Homogeneous]{\includegraphics[scale=0.45]{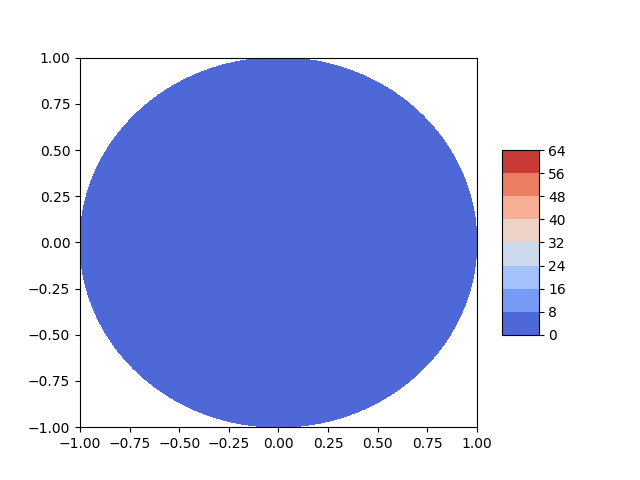}}\hspace*{-18mm}
\subfloat[Inhomogeneous]{\includegraphics[scale=0.45]{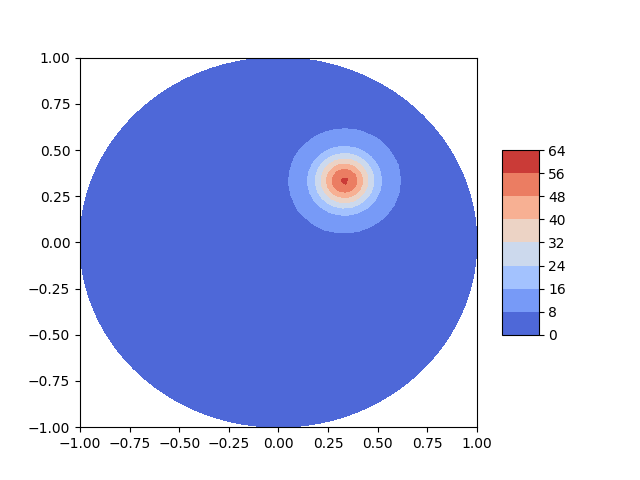}}\hspace*{-10mm}
\caption{Media configurations for homogeneous media \eqref{eqn: homo} and inhomogeneous media \eqref{eqn: inhomo}, respectively.}
\label{fig: landscape_A}
\end{figure}

\begin{figure}[!htb]
\centering
\subfloat[]{\includegraphics[scale=0.3]{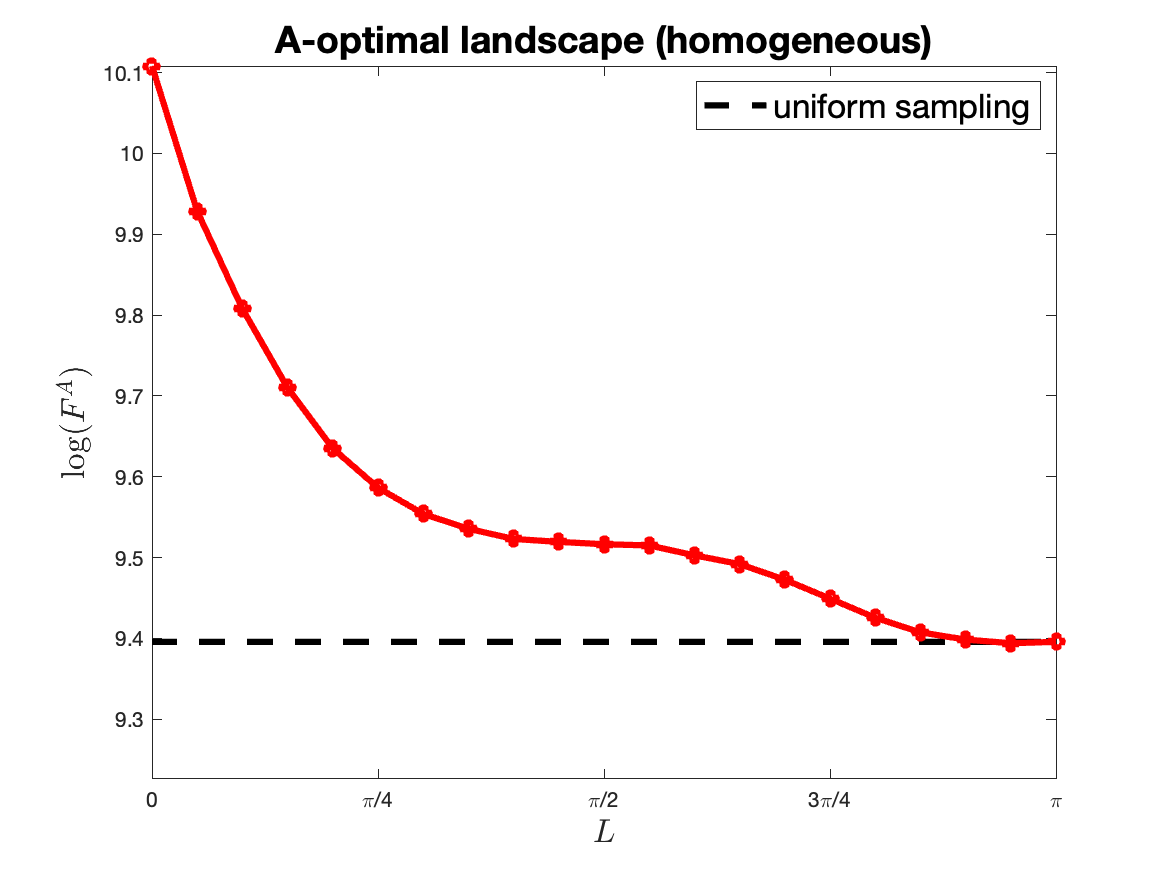}}
\hspace*{-5mm}\subfloat[]{\includegraphics[scale=0.3]{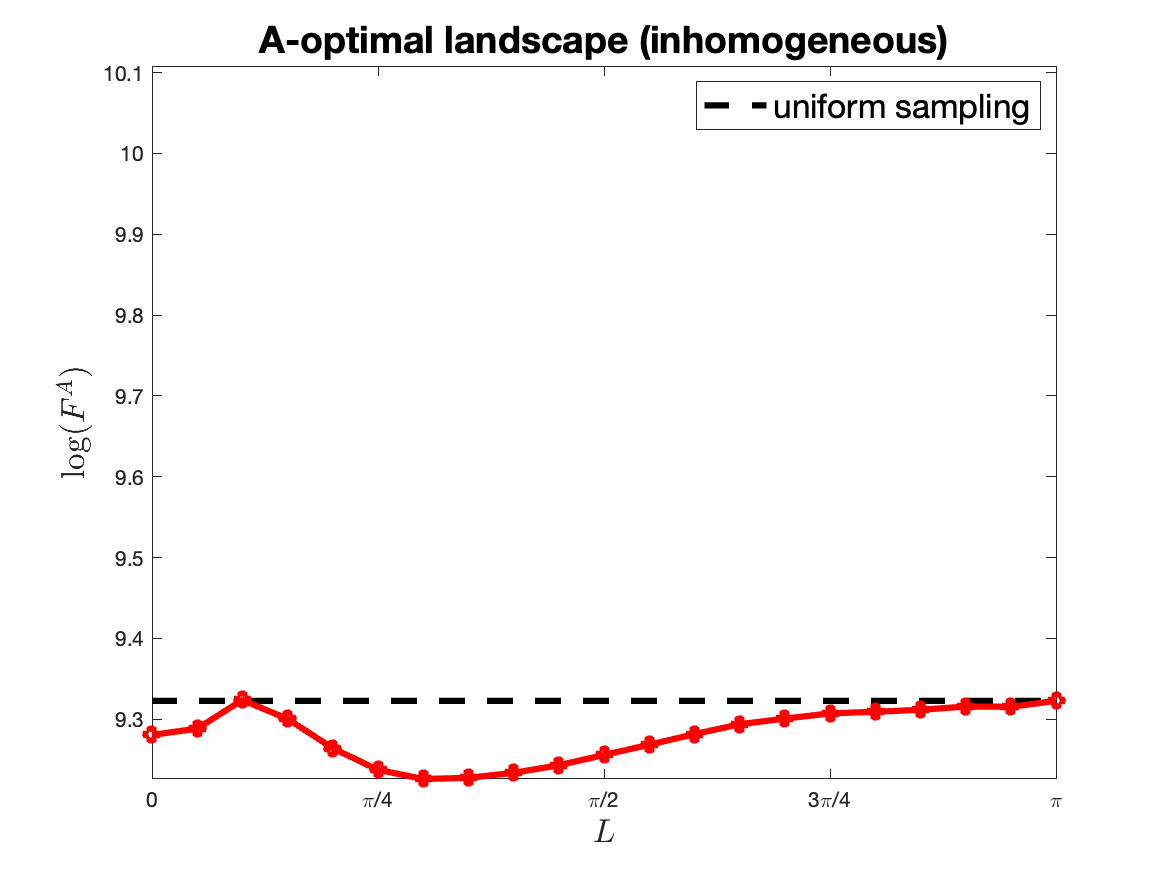}}
\hspace*{-5mm}\subfloat[]{\includegraphics[scale=0.3]{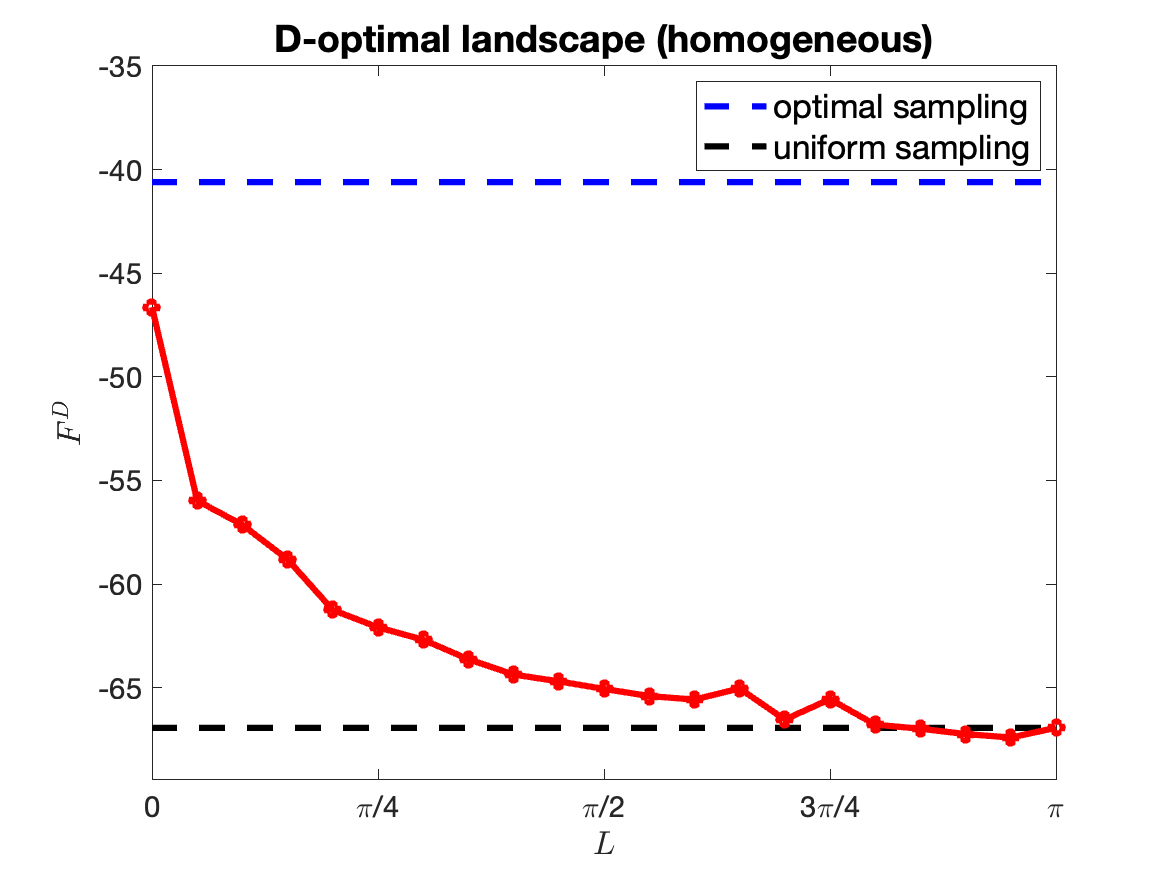}}
\caption{Panels (A), (B) show the landscape of the A-optimal objective \eqref{eqn: cont A-opt}, respectively tested on homogeneous \eqref{eqn: homo} and inhomogeneous \eqref{eqn: inhomo} media. Panel (C) is the landscape of the D-optimal objective \eqref{eqn: cont D-opt} in homogeneous case. The landscapes are captured by $\rho_L$ defined in \eqref{eqn:rho_parameter_plot}. For reference, the black dashed lines are obtained from a uniform sampling distribution over the entire $\Omega = \partial \mathcal{D}^2$. For (C), we also plot the solution to the optimal sampling strategy achieved by the classical Fedorov method \cite{F13,JD75}.}
\label{fig: landscape_D}
\end{figure}

\section{Gradient flow for linearized EIT design}
\label{sec: numerics}

We now describe the numerical performance of the particle gradient flow Algorithm~\ref{alg: particle flow} on EIT inverse problem, with subsections devoted to A-optimal and D-optimal design, respectively.

\subsection{A-Optimal design}
\label{subsec: A}

For the A-optimal objective criterion \eqref{eqn: cont A-opt}, our investigation focuses on two crucial factors that influence the resulting OED converging patterns: 1. the homogeneity of the underlying EIT medium, encoded through data matrices $\bf A_{\hom}$ \eqref{eqn: homo} and $\bf A_{\ihom}$ \eqref{eqn: inhomo}; 2. the initialization strategy of the particle gradient flow.
Particularly, we define the initialization $\rho^0$ to be a uniform distribution supported on one of three regions:
\begin{enumerate}
\renewcommand{\labelenumi}{\textbf{\theenumi}}
\renewcommand{\theenumi}{Init.\arabic{enumi}}
\makeatletter
\makeatother
\item 
\label{init: entire space}
entire design space; 
\item 
\label{init: L-shape}
restricted L-shape; 
\item 
\label{init: diagonal}
diagonal stripe. 
\end{enumerate}
(See Figure~\ref{fig: sampling region} for visualizations of the initialization.)

The design of~\ref{init: L-shape} assigns heavier weights to samples for which either $\theta_1$ or $\theta_2$ lie in the sector $[0,\frac{\pi}{2}]$. 
According to~\eqref{eqn: inhomo}, this region could carry more information, suggesting that it is a good place for sources and detectors. 

\begin{figure}[!htb]
\centering
\includegraphics[scale=0.37]{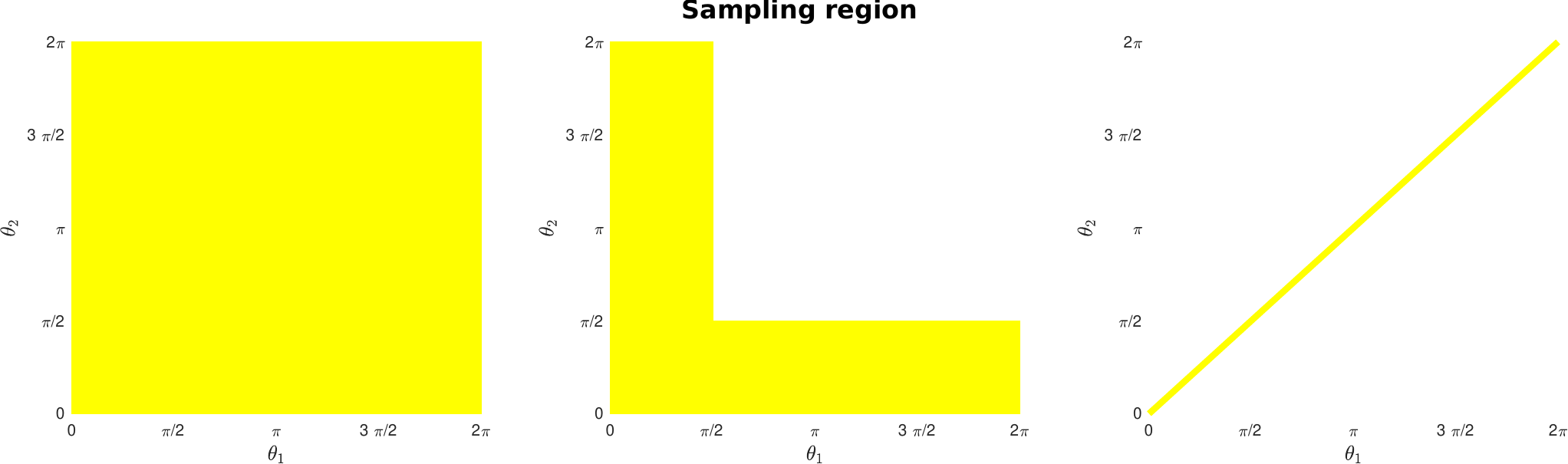}
\caption{Initialization strategies for particle sampling, illustrating regions \ref{init: entire space}, \ref{init: L-shape}, \ref{init: diagonal}, respectively.}
\label{fig: sampling region}
\end{figure}

We first examine the gradient information. Defining $\rho$ as in \ref{init: entire space}, we compute:
\[
\nabla_\theta\frac{\delta F^A[\rho]}{\delta\rho} \in \Real^2 \quad \eqref{eqn: A velocity}
\]
as a function of $\theta = (\theta_1, \theta_2)$ over the design space. This function is plotted in Figure~\ref{fig: A velocity field}, with the left panel showing results for homogeneous media $\mA_\hom$ and the right panel showing those for inhomogeneous media $\mA_{\ihom}$.
For $\mA_\hom$ (Figure~\ref{fig: A velocity field}(A)), the gradient magnitude is rather balanced over the entire design space, with relatively higher magnitude near the diagonal, where $\theta_1 \approx \theta_2$. 
In contrast,  Figure~\ref{fig: A velocity field}(B) shows that $\mA_{\ihom}$ has much stronger disparity in in the gradient, with the highest magnitude seen in the region of $(\theta_1,\theta_2)\in[0,\frac{\pi}{2}]^2$. 

\begin{figure}
\centering
\subfloat[Homogeneous]{\includegraphics[scale=0.35]{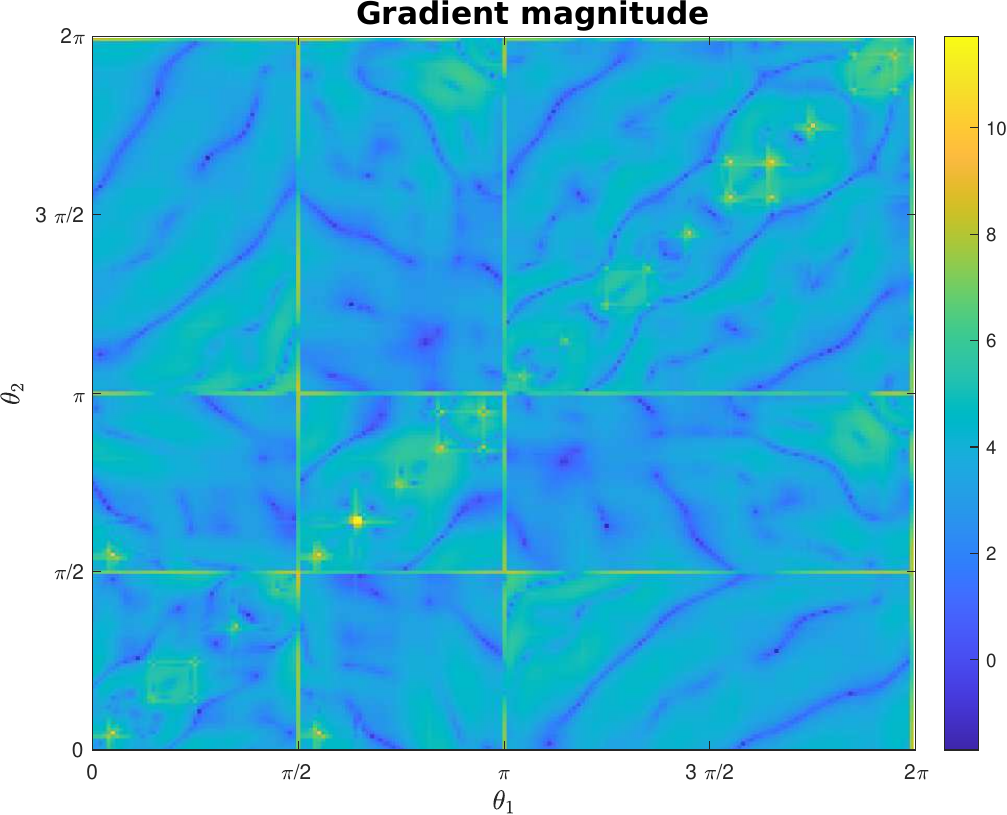}}
\subfloat[Inhomogeneous]{\includegraphics[scale=0.35]{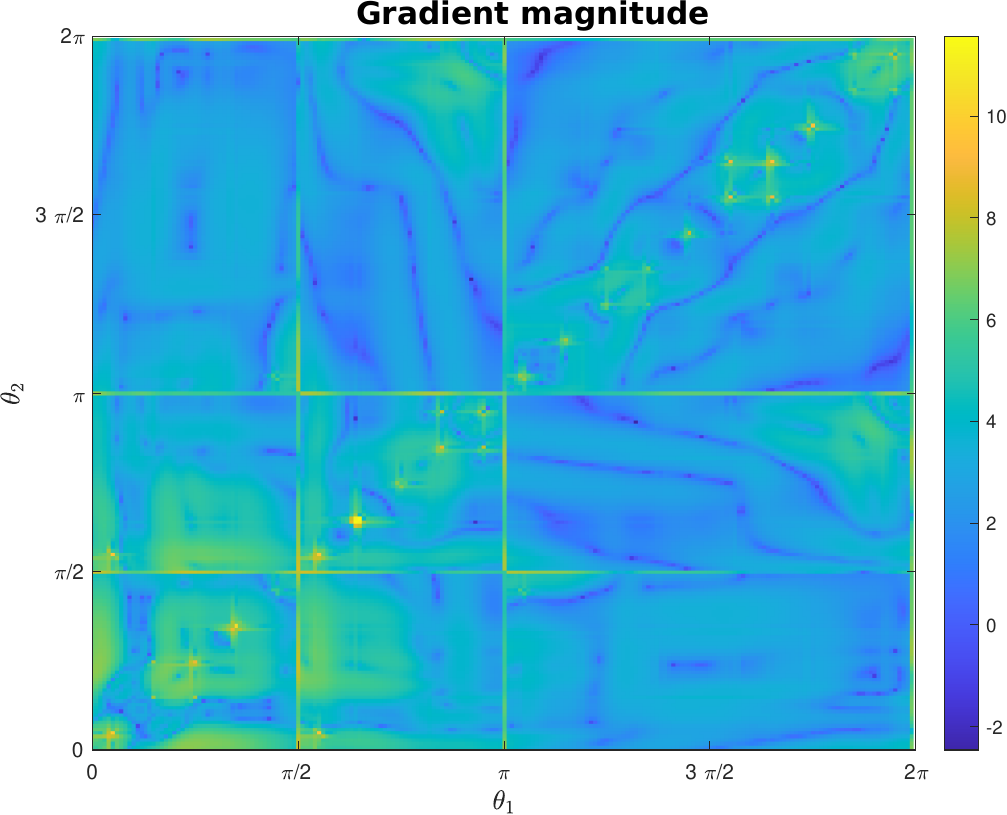}}
  \caption{The gradients are computed using the formula~\eqref{eqn: A velocity}. (A) and (B) respectively show the gradient magnitude for the homogeneous case $\mA_\hom$ and inhomogeneous case $\mA_\ihom$.} 
    \label{fig: A velocity field}
    \end{figure} 

We now discuss the evolution of probability measure $\rho$ during Algorithm~\ref{alg: particle flow}  for different scenarios of media types and initializations. 
For gradient flow, we set the time discretization to $\dd t = 10^{-3}$ and the total iteration number to $T = 5000.$
A periodic boundary condition is deployed in Algorithm~\ref{alg: particle flow}. 
To be specific, in line 3 of Algorithm~\ref{alg: particle flow},
the updated particle location follows $\theta$ is set to be $(\theta \mod 2\pi)$.

\begin{table}[!htb]
\begin{center}
\begin{adjustbox}{width=.65\columnwidth,center}
\begin{tabular}{c|ccc} 
  \hline
&  \ref{init: entire space}& \ref{init: L-shape}&  \ref{init: diagonal}\\ 
\hline
  homogeneous  & maintain & prone to spread & maintain \\ 
 inhomogeneous  & L-shape highlighting  & maintain & $/$\\
  \bottomrule
\end{tabular}
\end{adjustbox}
\end{center}
\caption{Gradient flow convergence summary for A-optimal design. Rows represent media; columns represent the three initialization schemes.}\label{tab:convergence}
\end{table}

Convergence results are summarized in Table~\ref{tab:convergence}. Further details for the case of homogeneous media are shown in Figure~\ref{fig: A homo}. Each row of panels shows evolution from one of the initialization schemes, including the initialization on the left and an advanced time point in the third panel.
For all initial sampling regimes, the objective values (shown in the rightmost column) decay until they saturates at a plateau, consistent with known properties of the gradient flow. When the initial measure is uniform in the entire domain $[0,2\pi]^2$, the algorithm returns a distribution that again spreads the domain. When the distribution is initially confined to the L-shaped area, it tends to move beyond the local region and spread further.
We note from panel (c) that the distribution concentrated along the diagonal seems to be a local minimum: If all samples are initially prepared in the diagonal stripe (\ref{init: diagonal}), the gradient flow only moves them along the diagonal, producing a final probability distribution supported only on the diagonal $\theta_1\approx \theta_2$. 
Note that the final value of $F^A$ is considerably larger here than for the other initialization schemes, suggesting this is a local optimum. 

\begin{figure}
\centering
\subfloat[\ref{init: entire space}.]{\includegraphics[scale=0.27]{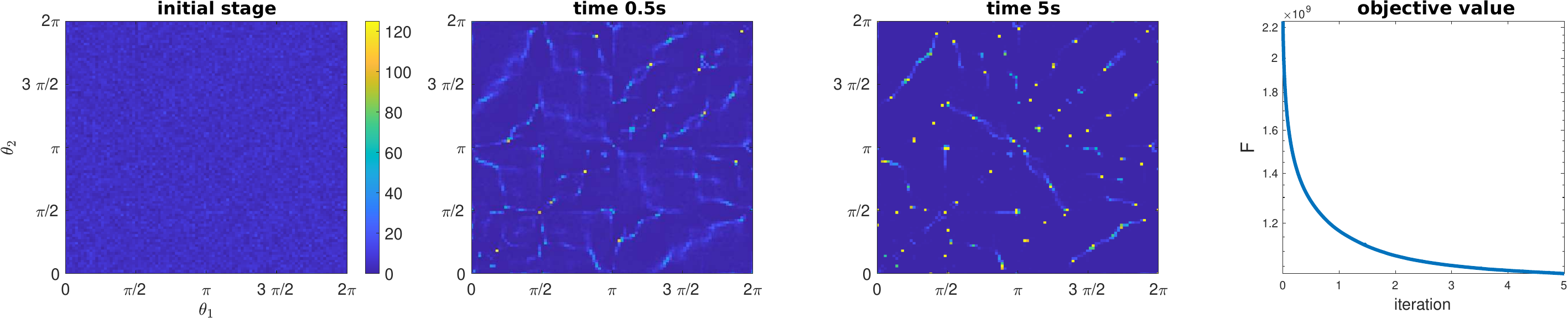}}\\
\centering
\subfloat[\ref{init: L-shape}. ]{\includegraphics[scale=0.27]{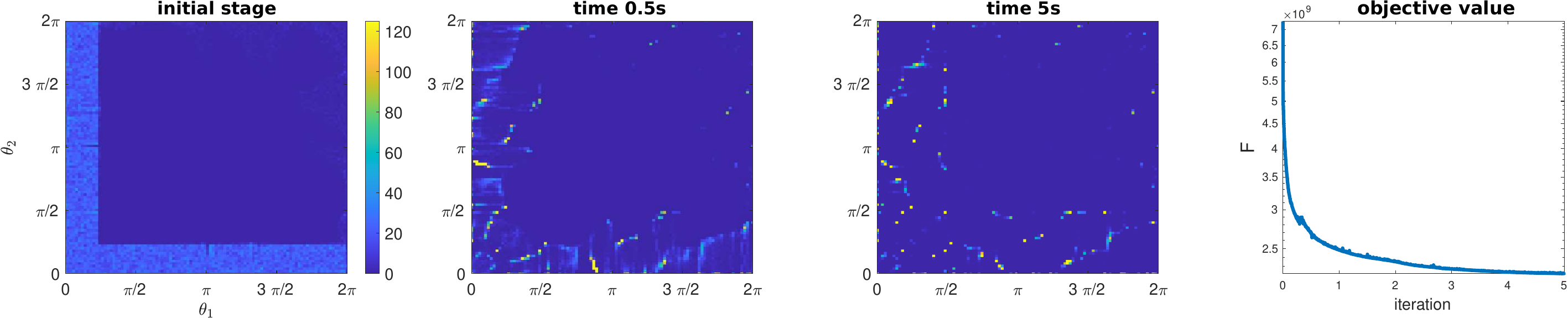}}\\
\centering
\subfloat[\ref{init: diagonal}. ]{\includegraphics[scale=0.27]{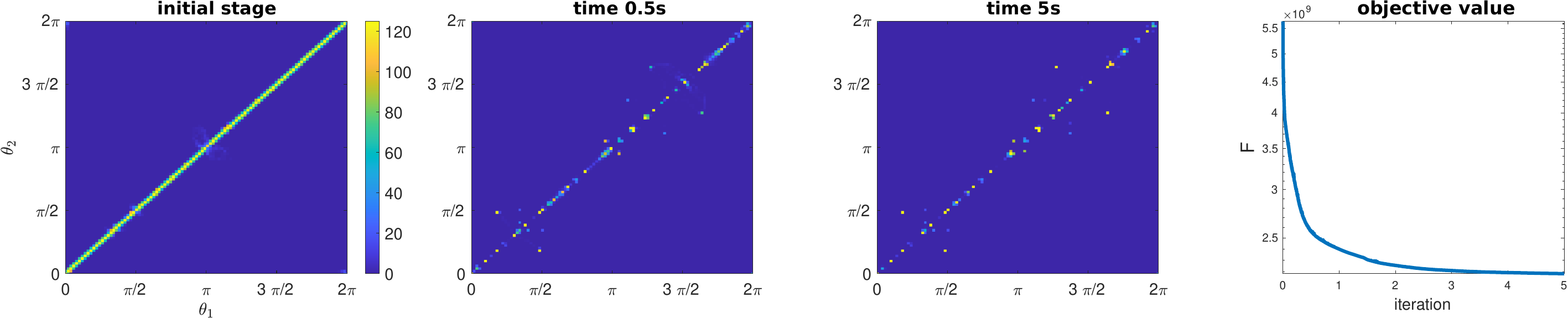}}\\
\caption{Homogeneous regime: Evolution of particle gradient flow Algorithm~\ref{alg: particle flow} under A-optimal criterion. }
\label{fig: A homo}
\end{figure}

Figure~\ref{fig: A inhomo} illustrates a similar study for the inhomogeneous media $\mA_{\ihom}$. Similar to the homogeneous example, the objective function value $F^A$ decreases steadily, but the final configurations are quite different from the homogeneous case. 
In the first row (\ref{init: entire space}), particles initially sampled over the entire space tend to highlight the restricted L-shape  as well as other scattered parts.
In the second row (\ref{init: L-shape}), where the initial samples are in the L-shaped area already, they tend to stay in that region. These results suggest that either the source $\theta_1$ or the detector $\theta_2$ should be placed within the angle $[0,\pi/2],$ as this region delivers more information than the rest of the domain. 
Similar to the homogeneous media case, the sampling concentrated along the strip of $\theta_1=\theta_2$ appears to represent a local minimum, with an initial distribution with this property leading to subsequent iterates sharing the same property. (We omit the plots for this case.)

\begin{figure}[!htb]
\centering
\subfloat[\ref{init: entire space}]{\includegraphics[scale=0.27]{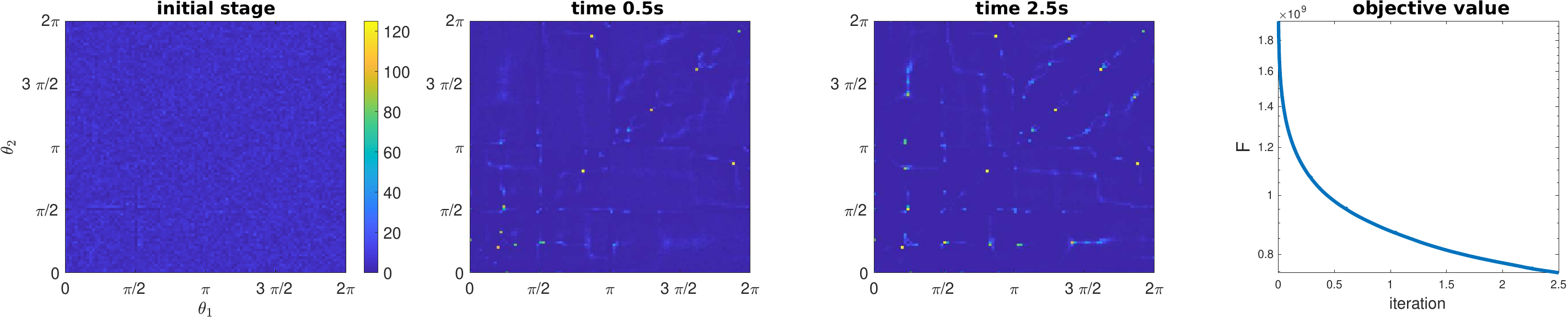}}\\
\centering
\subfloat[\ref{init: L-shape}]{\includegraphics[scale=0.27]{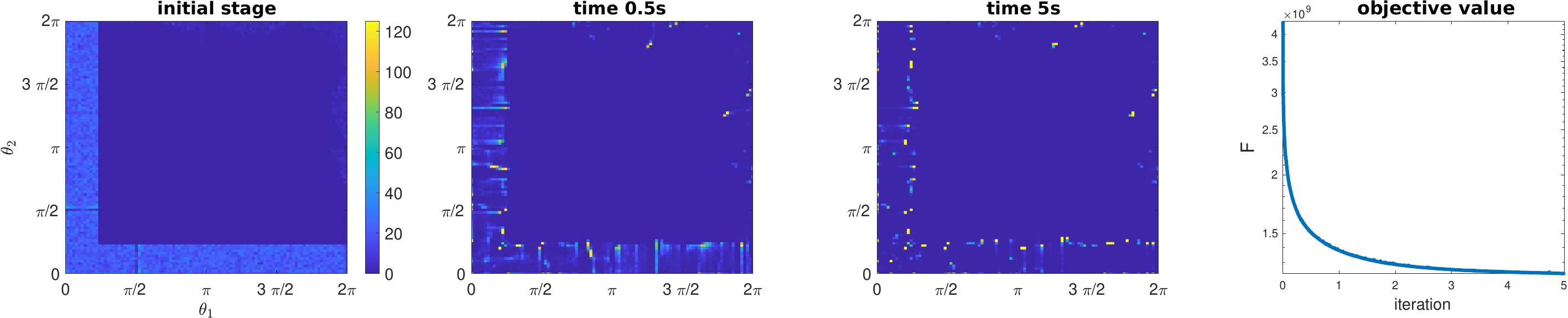}}
\caption{Inhomogeneous regime: evolution of particle gradient flow Algorithm~\ref{alg: particle flow} under A-optimal criterion.}
\label{fig: A inhomo}
\end{figure}

In summary of A-optimal design for two media layouts, we observe a recurring theme: Uniform sampling across the entire space and diagonally alignment generally represent (local) optimal design measures. In the presence of inhomogeneity, another favorable region emerges where either the source or detector is positioned near the medium bump ($\theta_i\in[0,{\pi}/{2}]$).

\subsection{D-Optimal design}

In this section, we investigate the design performances of Algorithm~\ref{alg: particle flow} using the D-optimal design criterion \eqref{eqn: cont D-opt}. The numerical tests conducted here are less extensive than those in the previous A-optimal Section~\ref{subsec: A}. We primarily focus on assessing the influence of the initialization strategy on the D-optimal design results. In addition, we offer insights into whether D-optimal designs exhibit behavior related to that observed under the A-optimal criterion.
We consider two types of initialization strategies for the distributions of the samples particles: 
\begin{enumerate} 
\renewcommand{\labelenumi}{\textbf{\theenumi}}
\renewcommand{\theenumi}{Init.\alph{enumi}}
\makeatletter
\makeatother
    \item 
    \label{init: uniform}
    uniform distribution on the entire design space;
    \item
    \label{init: quasi-optimal}
    (approximately) optimal distribution provided by Fedorov method \cite{F13}.
\end{enumerate}

The gradient information of D-optimal design is shown in Figure~\ref{fig: D velocity field}. 
The gradient magnitude heatmap shows particles are the most active in the diagonal area. 

\begin{figure}[!htb]
\centering
\subfloat{\includegraphics[scale=0.35]{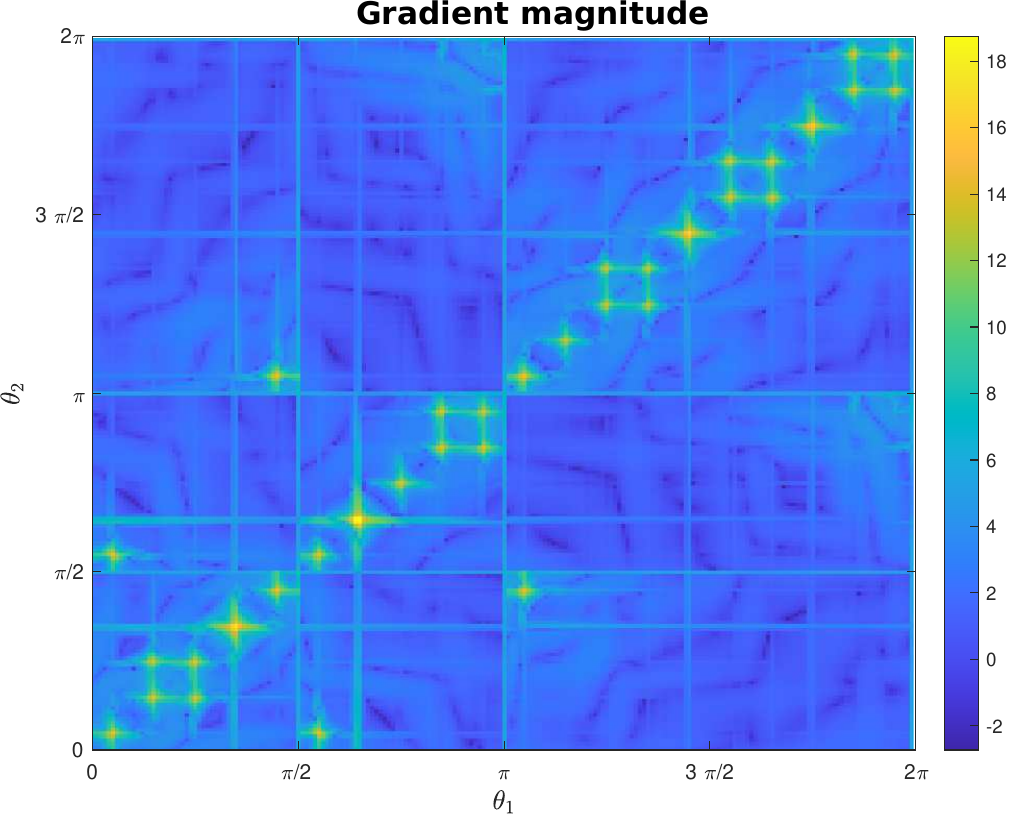}}
\caption{Gradient magnitude under D-optimal design criterion \eqref{eqn: cont D-opt}.}
\label{fig: D velocity field}
\end{figure}

Results for Algorithm~\ref{alg: particle flow}, using flow simulation time step $\dd t = 10^{-6}$, are shown in Figure~\ref{fig: D evolution}. (A) shows that an initially uniform distribution of $\rho$ largely remains uniform during Algorithm~\ref{alg: particle flow}, developing only thin concentration by the end at $T = 10^5$ steps. Meanwhile, the D-optimal value in the right-most panel demonstrates a significant improvement in the conditioning of the data matrix, highlighting the advantage of the OED particle gradient flow at this sensitive local optimum.
In Figure~\ref{fig: D evolution} (B), for the Fedorov-method initialization, we see again that the initial and final distributions are fairly similar in character, while again the D objective increases steadily showing the functionality of design.
This contrasting phenomenon further confirms the sensitivity of the EIT inverse problem, where conditioning can remain fragile even after a visually satisfactory design is obtained via the quasi-optimal Fedorov method. At the same time, it corroborates the value of our proposed gradient flow approach, which rigorously drives the design toward optimality, improving conditioning and supporting robust data acquisition.
Interestingly, the A-optimal objective function $F^A$ decreases steadily during execution for the first initialization, but not the second.
On both optimality criteria, the diagonally concentrated configuration (B) achieves better quantitative results than uniform distribution (A). Hence, for EIT medium with homogeneity, we recommend placing the source and detector in alignment with each other.

\begin{figure}[!htb]
\centering
\subfloat[\ref{init: uniform}]{\includegraphics[scale=0.27]{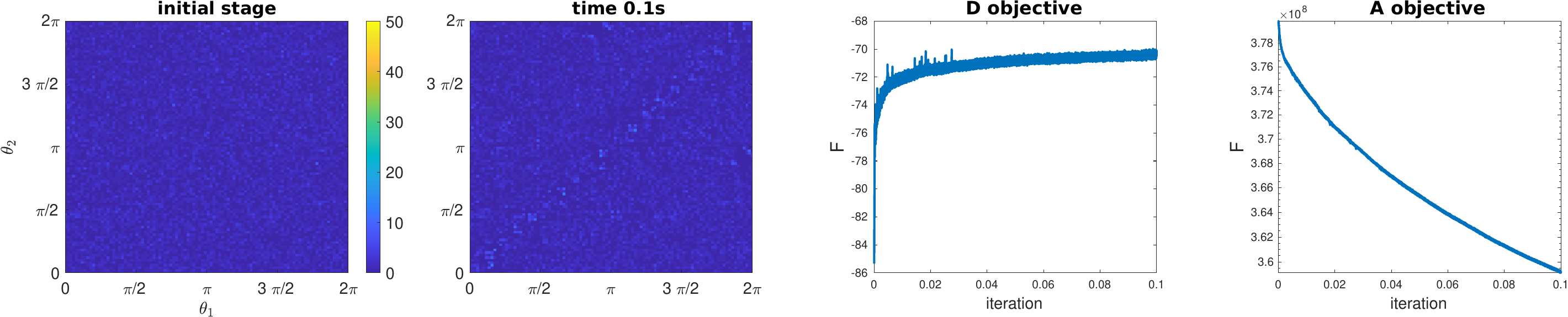}}\\
\centering
\subfloat[\ref{init: quasi-optimal}]{\includegraphics[scale=0.27]{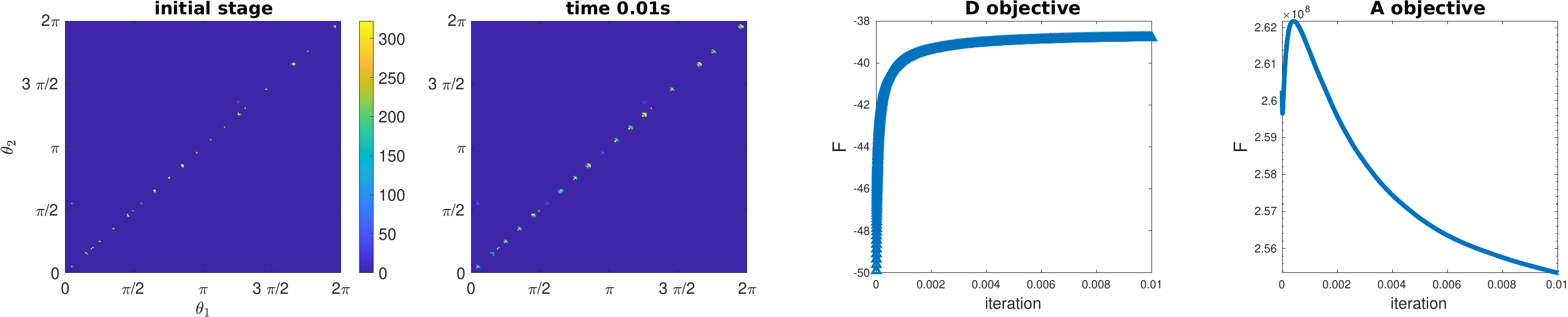}}
\caption{Evolution of particle gradient flow \ref{alg: particle flow} under D-optimal criterion, tested on the homogeneous media data $\mA_\hom$.}
\label{fig: D evolution}
\end{figure}

To conclude, we investigated A- and D-optimal continuous designs for the EIT model under varying data homogeneities using the particle gradient flow Algorithm~\ref{alg: particle flow}. A key finding is that the resulting OED is highly sensitive to initialization. Effective initializations can be obtained from prior physical knowledge of the model (see Figure~\ref{fig: A inhomo} (B)), or partial runs of classical quasi-optimal design methods, as in Figure~\ref{fig: D evolution} (B). Alternatively, uniform initialization over the design space can be used, though it may provide only a coarse indication of measure concentration, as illustrated in Figure~\ref{fig: A inhomo} (A).

\section{Linearized Darcy flow}
\label{sec: new numerics}
This section presents a second test example based on the 1D Darcy flow model \cite{HV19,GHLS20}. Compared to the previous EIT model, the Darcy flow example exhibits greater numerical stability and distinct physical characteristics. Our main goal is to evaluate the gradient flow OED Algorithm~\ref{alg: particle flow} on this model and to provide a complementary numerical study, along with interpretations that highlight the underlying physical insights.
 
The Darcy flow PDE is a 1D elliptic equation:
\begin{equation}
\label{eqn:Darcy}
\left\{
\begin{array}{l}
-(\sigma u')' = S(y),  ~~ y \in [0,1]    \\
u \vert_{y = 0, 1} = 0.
\end{array}\right.
\end{equation}
For every source $S(y)$, the solution $u$ can be measured in the entire domain $[0,1]$. The goal is to design $S(y)$ and measurement locations so as to reconstruct $\sigma$. Analogous to the EIT model, we employ a finite equispaced discretization represent $\sigma$ as an unknown vector in dimension $d.$ 

Similar to~\eqref{eqn:fredholm_eit}, the linearized problem is transformed into a Fredholm first type integral:
\[
\int_0^1 r_\theta(y) \sigma(y)\, \dd y = \text{data}_{\theta}\,,
\]
with $r_\theta(y)=u_{\theta_1}' (y) \, v_{\theta_2}'(y)$ where $u_{\theta_1}$ and $v_{\theta_2}$ solve the following forward and adjoint equations respectively:
\begin{equation}
\label{eqn: new model} 
\left\{
\begin{array}{l}
-(\sigma u')' = \delta_{\theta_1},  ~~ y \in [0,1]    \\
u \vert_{y = 0, 1} = 0.
\end{array}\right.
\quad 
\left\{
\begin{array}{l}
-(\sigma v')' = \delta_{\theta_2},~~ y \in [0,1]\\
v \vert_{y = 0, 1} = 0.
\end{array}\right.
\end{equation}
The pair $(\theta_1,\theta_2)$ belongs to the design space $\Omega = [0,1 ]^2$.

Computationally, we set the ground-truth media $\sigma: [0,1] \to \mathbb{R}$ to be a Gaussian function:
\[
\sigma(y) = 1+1000\, \text{exp} (-1000 (y-0.25)^2);
\]
see plot in Figure~\ref{fig: new model}(A). 
This contrast media produces PDE solutions of disparate profiles depending on the source location. In Figure~\ref{fig: new model}(B), we showcase the profile of $u$ \eqref{eqn:Darcy} with $\theta_1=0.25$ (coinciding  with the bump in the media) and $\theta_1=0.5$ (away from the bump). 
\begin{figure}[!htb]
\centering
\subfloat[]{\includegraphics[scale=0.35]{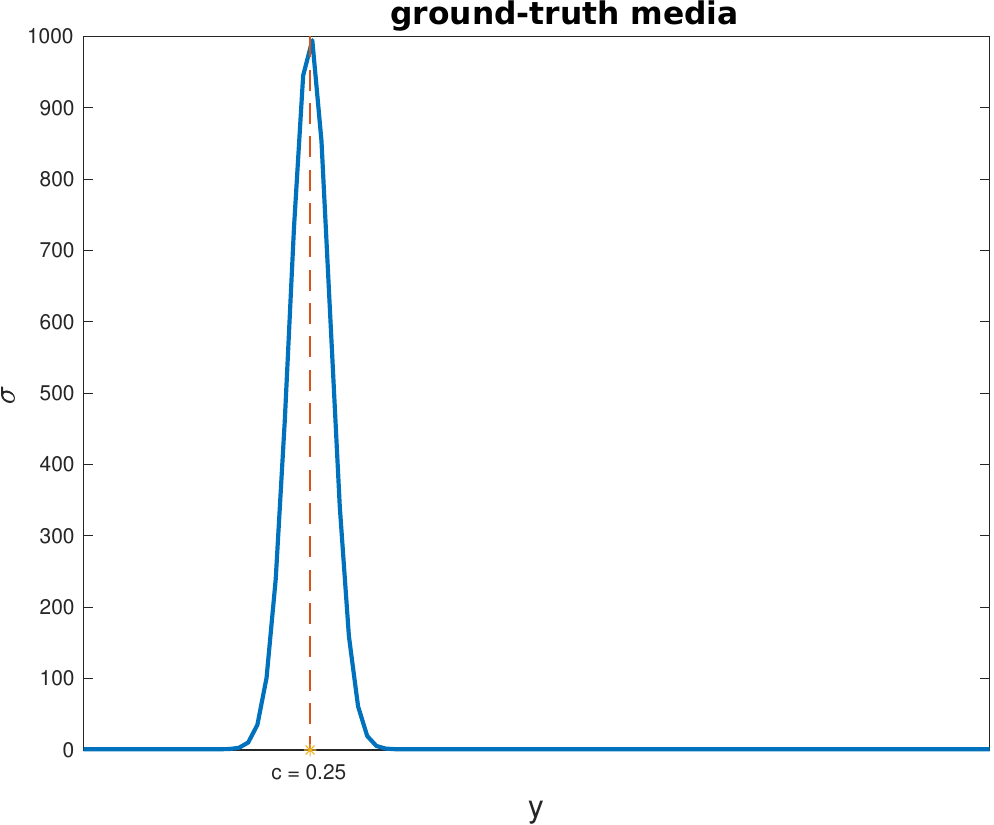}}~~
\subfloat[]{\includegraphics[scale=0.35]{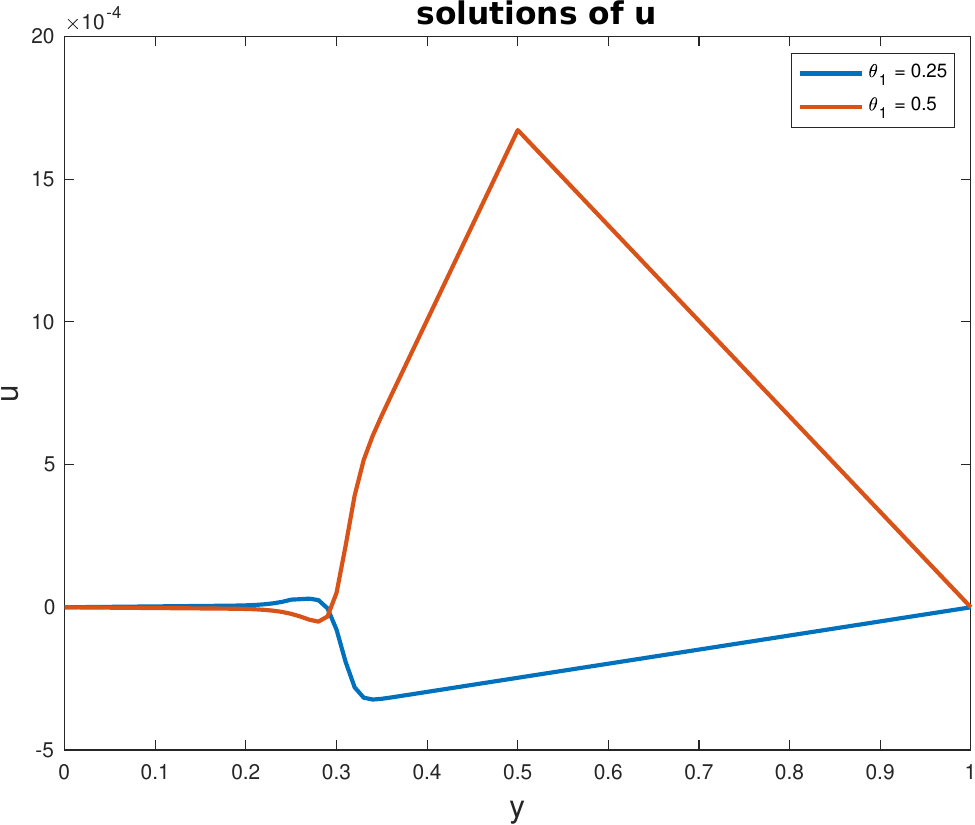}}
\caption{(A): The ground-truth media function $\sigma$ with the bump center at $c = 0.25$. (B): The forward model \eqref{eqn: new model} solutions of $u$ with different sources.}
\label{fig: new model}
\end{figure}

Numerically, we simulate the PDE model with $100$ equally spaced points on $[0,1]$, with $\dd y= 0.01$. 
We assume that $\sigma$ is piecewise constant and parametrize it using a $d$-dimensional vector. In this way, we construct the data matrix $\mA$ of size $100^2\times 20$ $(d=20)$.

Given this matrix $\mA$, the OED aims to find a probability distribution $\rho(\theta_1, \theta_2)$ with $(\theta_1,\theta_2)\in[0,1]^2$ that optimizes the A-optimal and D-optimal criteria in \eqref{eqn: cont A-opt} and \eqref{eqn: cont D-opt}, respectively, using Algorithm~\ref{alg: particle flow}.

In the A-optimal case, we first inspect the gradient magnitude \eqref{eqn: A velocity} in Figure~\ref{fig: A gradient new}, testing on the probability measure $\rho$ to be the uniform distribution over the entire design space $[0,1]^2.$ 
The particles are most active in the neighborhood of $(\theta_1,\theta_2)=(0.25,0.25)$, aligning with the media bump at $y=0.25$. 

\begin{figure}[!htb]
\centering
\includegraphics[scale=0.55]{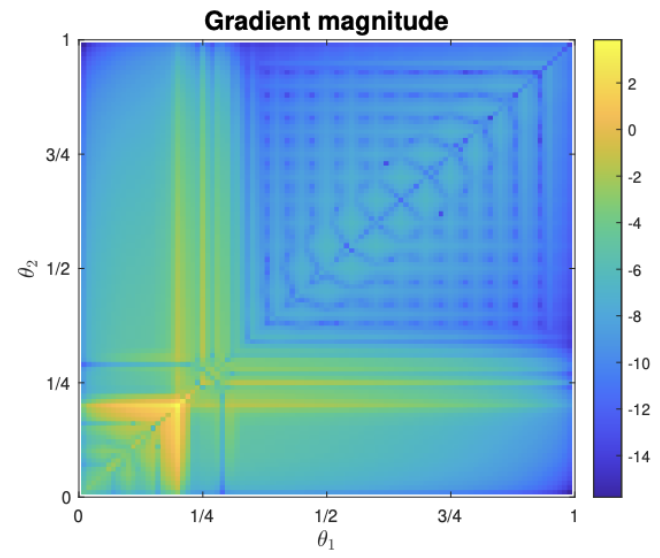}
\caption{Gradient magnitude under the A-optimal objective.}
\label{fig: A gradient new}
\end{figure}

Next, we demonstrate the progression of Algorithm~\ref{alg: particle flow}. Three snapshots of $\rho$ along the evolution are exhibited in Figure~\ref{fig: A evolution new}, with the starting point in the left panel and the A-optimal distribution at the right. Starting from a uniform distribution, the gradient flow drives the samples of $\rho$ away from the media bump at $0.25.$

\begin{figure}[!htb]
\centering
\includegraphics[scale=0.42]{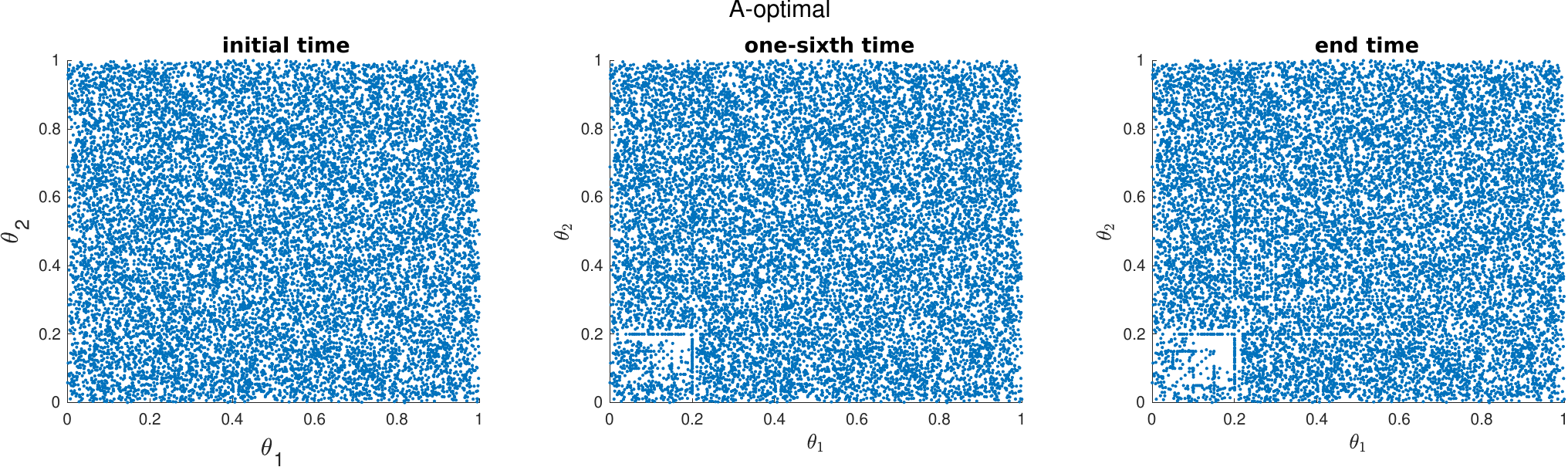}
\caption{Evolution of particle gradient flow Algorithm~\ref{alg: particle flow} under A-optimal criterion.}
\label{fig: A evolution new}
\end{figure}

Finally, we discuss the performance of Algorithm~\ref{alg: particle flow} on D-optimal design. In the gradient magnitude Figure~\ref{fig: D gradient new}, the active particle regions are two diagonal blocks which are separated at around $(0.25, 0.25)$, coinciding with the medium bump. Intense velocity in the diagonal is due to the strong signal received when source-detection pair coincides.
Three snapshots along the evolution are presented in Figure~\ref{fig: D evolution new}. Note that the samples concentrate in the two blocks in which $\theta_1$ and $\theta_2$ take on complimentary locations. 

 We conclude with a discussion of physical interpretations. Alongside Figures~\ref{fig: D gradient new} and \ref{fig: D evolution new}, slow particle regions (the two side blocks) correspond to the desired sensor locations, while active nodes in the complimentary diagonal blocks tend to leave their active regions. This indicates that OED favors stability over volatility: large gradients do not necessarily mark optimal sensor spots, whereas slow-moving region captures particles and stabilizes the design. For practical data acquisition, placing the source and detector on opposite sides of boundary [0,1] can improve measurement quality and reduce uncertainty.
\begin{figure}[!htb]
\centering
\includegraphics[scale=0.55]{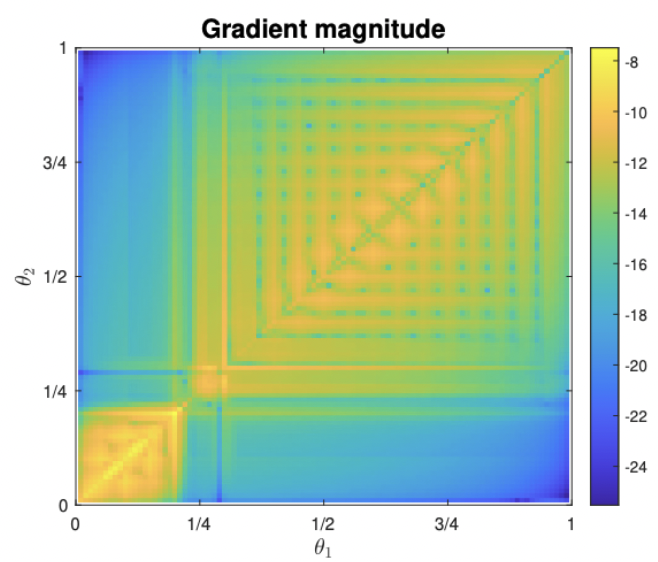}
\caption{Gradient magnitude under the D-optimal objective. }
\label{fig: D gradient new}
\end{figure}

\begin{figure}[!htb]
\centering
\includegraphics[scale=0.42]{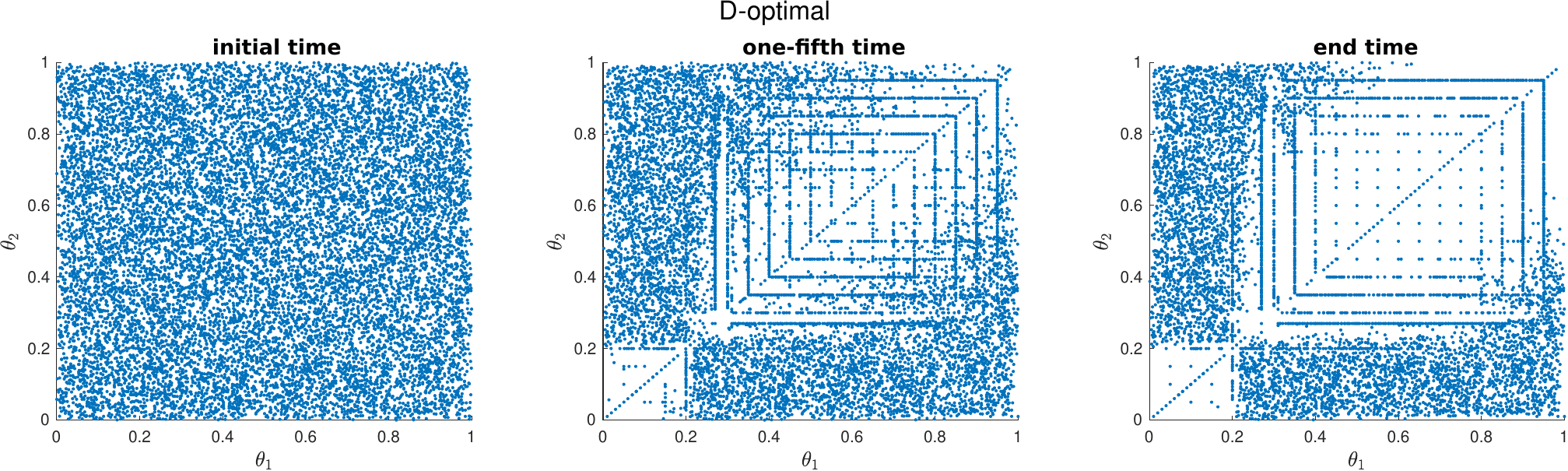}
\caption{Evolution of particle gradient flow Algorithm~\ref{alg: particle flow} under D-optimal criterion.}
\label{fig: D evolution new}
\end{figure}

Following up on the intriguing D-optimal design layout in Figure \ref{fig: D evolution new}, where particles concentrate in two local blocks, we aim to further incorporate a sparsity-promoting feature into the standard design criterion. 
Here, we use the terms ``sparsity" and ``concentration" interchangeably to describe the same effect: the clustering of particles in specific regions. The sparsity feature would enhance the practicality of our design algorithm and guide the data allocation procedure. 

Specifically, we modify the D-optimal criterion \eqref{eqn: cont D-opt} as
\begin{equation}
\label{eqn: sparse D-opt}
\max_{\rho \in \text{Pr}_2(\Omega)} F^D[\rho] +  \,\mathcal{R}[\rho],
\end{equation}
where the concentration reward is given by 
\begin{equation}
\label{eqn: kernel}
\mathcal{R}[\rho] = \int_{\Omega}\int_{\Omega} \text{exp}\left(-\frac{\| \theta - \theta'\|_2^2}{0.01} \right) \rho(\theta) \, \rho(\theta')\, \dd \theta \dd \theta'.
\end{equation}
Intuitively, this kernel reward encourages distributions in which particles nearby stick together, since contributions to the objective are the largest when particles are near one another. Thus, it serves as a sparsity-inducing regularizer.

In Figure \ref{fig: D evolution new sparse}, we implement Algorithm~\ref{alg: particle flow} to maximize the new design criterion with sparsity control \eqref{eqn: sparse D-opt}. The design measure $\rho$ starts from the two dense local blocks observed in the end pattern of Figure~\ref{fig: D evolution new}. We compare the resulting OEDs \textbf{with} and \textbf{without sparsity control}. It is evident that the design with the sparsity control \eqref{eqn: kernel} forms even denser concentrations, as the kernel in \eqref{eqn: kernel} creates a self-attracting field, whereas the design without this encouragement remains the original regions. Importantly, both design procedures (A) and (B) of Figure \ref{fig: D evolution new sparse} achieve the same D-optimal objective value, as shown in the right panel, indicating that sparsity regularization can operate independently of the pursuit of D-optimality.

\begin{figure}[!htb]
\centering
\subfloat[{\textbf{With sparsity control}}]{\includegraphics[scale=0.42]{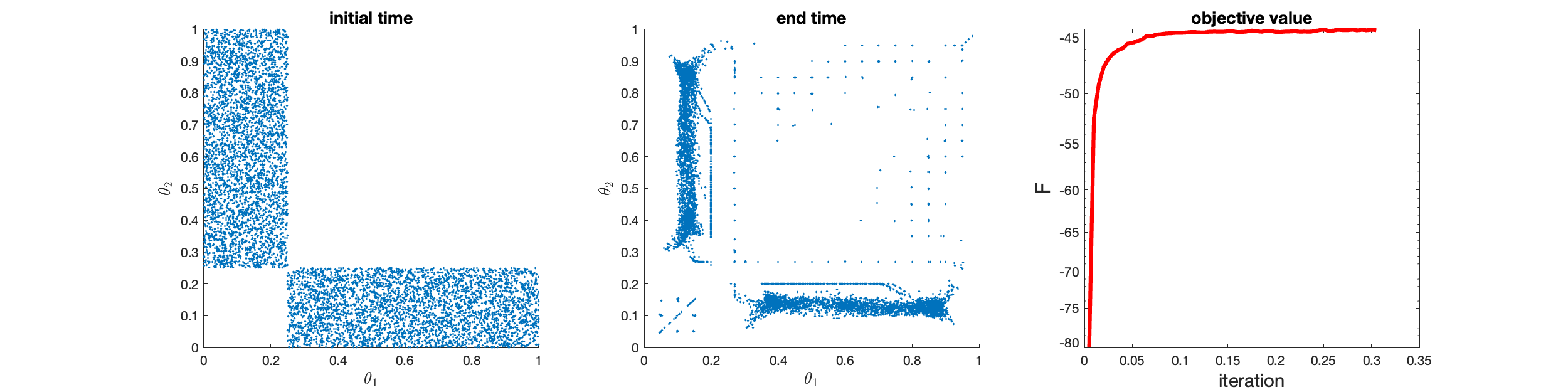}}\\
\subfloat[{\textbf{Without sparsity control}}]{\includegraphics[scale=0.42]{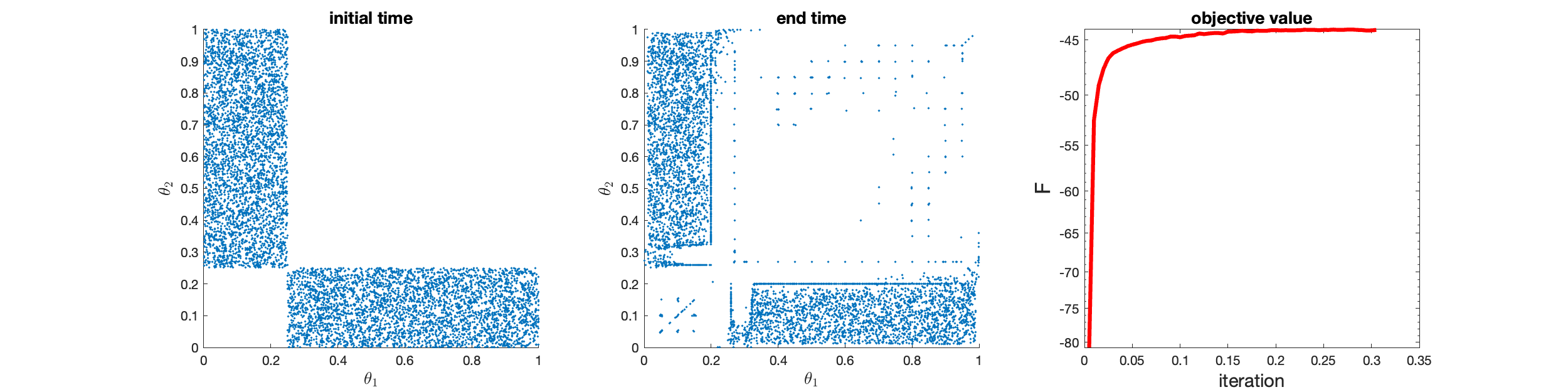}}
\caption{Evolution of particle gradient flow Algorithm~\ref{alg: particle flow} under D-optimal criterion with local initialization.}
\label{fig: D evolution new sparse}
\end{figure}

\section{Conclusions}

As computational techniques involving optimal transport and Wasserstein gradient flow become more mature, they offer the opportunity to deal with infinite-dimensional probability measure space, enabling a new and wider range of  applications. 
The optimal experimental design (OED) problem in continuous design space is one such example, offering an important generalization over more traditional discrete experimental design.

The move from finite-dimensional Euclidean space to the infinite-dimensional probability manifold results in a more challenging optimization problem. 
We use newly available Wasserstein gradient flow techniques to recast the continuous OED problem. 
In particular, the gradient flow on measure space is mapped to gradient descent on a discrete set of particles representing the distribution in the Euclidean space. 
Algorithm~\ref{alg: particle flow} can be applied to solve the continuous OED. 
Moreover, we have provided the first criticality condition and basic convexity analysis under the A- and D-optimal design criteria. As a proof of concept, we assessed the algorithm's performance on the EIT problem, observing convergence of Algorithm~\ref{alg: particle flow} to distributions that reveal interesting design knowledge on specific EIT media examples. 

The present work opens the door to many additional questions, including the following.
\begin{enumerate}
\item Tensor structure in particle gradient flow Algorithm~\ref{alg: particle flow}. If the design space $\Omega$ is high dimensional, it might be possible to decompose the particle update in Algorithm~\ref{alg: particle flow} and follow a multi-modal scheme. How to take advantage of such tensor structure to improve efficiency of the optimization process is one interesting direction to pursue. 
\item Sensitivity to noise. It would be interesting to study sensitivity of the continuous OED optimizer  to the noise encoded in the objective function and data.
\item Explicit error bound in the simulation. The results available currently do not constitute a rigorous numerical analysis, though we provided a roadmap in Section~\ref{subsec: simulation error}. It would be interesting to fill in the missing technical arguments.  

\item Nonlinear optimal design.  
This paper addresses the optimal design problem only for the linear inverse problem. 
Most physical models do not have a linear relationship between the inferred quantity and data observations, so study of  OED in the convoluted nonlinear setting is a useful question for future study. 
\item Incorporating sparsity in the continuous design framework.
For data allocation, OED patterns that are more concentrated or sparse across most area can provide useful guidance for practitioners.
\end{enumerate}

\section*{Acknowledgements}
R.J., M.G. and Q.L. acknowledge the support from NSF-DMS-2308440. All authors of this paper are further supported by NSF-DMS-2023239. M.G. is supported by NSF-DMS-2012292.

\bibliographystyle{plain}



\end{document}